\documentclass[12pt]{amsart}
\usepackage{amsfonts,amssymb,amscd,amsmath,amsthm,enumerate,verbatim,calc,}
\usepackage[all]{xy}

%\copyrightinfo{2002}{American Mathematical Society}

\topmargin .2in
\textheight 9in
\textwidth 6.7in
\oddsidemargin -.2in 
\evensidemargin -.2in
\theoremstyle{plain}

\newcommand{\bb}{\begin{tiny} \begin{bmatrix}}
\newcommand{\eb}{\end{bmatrix} \end{tiny}}

\setlength{\arraycolsep}{.96pt}

\newtheorem{Theorem}{Theorem}[section]

\newtheorem{main theorem}{Main Theorem}
\newtheorem{Lemma}[Theorem]{Lemma}
\newtheorem{Corollary}[Theorem]{Corollary} 
\newtheorem{Proposition}[Theorem]{Proposition}

\newtheorem{Conjecture}{Conjecture}
\theoremstyle{definition}
\newtheorem{Definition}[Theorem]{Definition}
\newtheorem{Example}[Theorem]{Example}
\newtheorem{Remark}[Theorem]{Remark}

\newtheorem{Discussion}[Theorem]{Discussion}
\theoremstyle{remark}

\newtheorem*{chunk*}{}

\numberwithin{equation}{Theorem}

\newcommand{\mlabel}[1]%
  {\mbox{}\marginpar{\raggedleft\hspace{0pt}{\rm\ttfamily#1}}\label{#1}}

\newcommand{\Mod}{\operatorname{Mod}}

\newcommand{\fm}{{\mathfrak m}}

\newcounter{hours}\newcounter{minutes}

\newcommand{\excise}[1]{}

%%%%%%%%%%%%%%%%%%%%%%%%%%%%%%%%%%%%%%%%%%%%%%%%%%%%%%%%%%%%%%%%%%%%%%%%%%%%
\begin{document}
\title[Almost Cohen-Macaulay algebras in mixed characteristic via Fontaine rings]
{Almost Cohen-macaulay algebras in mixed characteristic via Fontaine rings}
\author[K.Shimomoto]{Kazuma Shimomoto}

\address{Department of Mathematics, School of Science and Technology, Meiji University, 1-1-1 Higashimita, Tama-ku, Kawasaki 214-8571, Japan.}
\email{shimomotokazuma@gmail.com}

\thanks{2000 {\em Mathematics Subject Classification\/}:13A35, 13D22.}

\keywords{Fontaine~ring,~monomial~conjecture,~regular~sequence,~Witt~vector.} 

%\subjclass{13}
%\subjclass[2000]{Primary 13-XX}
%\subjclass[2000]{Primary ; Secondary }
%\date{\today \, (\printtime)}
%\date{\today}

\begin{abstract} 
In the present paper, it is proved that any complete local domain of mixed characteristic has a weakly almost Cohen-Macaulay algebra $B$ in the sense that a system of parameters is a weakly almost regular sequence in $B$, which is a notion defined via a valuation. In fact, the central idea of this result originates from the main statement obtained by Heitmann to prove the Monomial Conjecture in dimension 3. A weakly almost Cohen-Macaulay algebra is constructed over the absolute integral closure of a complete local domain by applying the methods of Fontaine rings and Witt vectors. A connection of the main theorem with the Monomial Conjecture is also discussed.
\end{abstract}

\maketitle

\section{Introduction}

Let $(R,\fm)$ be a local Noetherian ring with a system of parameters $x_{1},\ldots,x_{d}$. We recall that an $R$-algebra $B$ is called a \textit{big Cohen-Macaulay R-algebra} if $\fm B \ne B$ and the sequence $x_{1},\ldots,x_{d}$ is $B$-regular. There is no finiteness condition on $B$. The following conjecture raised by Hochster~\cite{Ho75} has been of central interest in the study of certain homological conjectures.

\begin{Conjecture}[Hochster]
Every local Noetherian ring of mixed characteristic has a big Cohen-Macaulay algebra.
\end{Conjecture}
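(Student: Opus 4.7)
The plan is to reduce to the case of a complete local domain $(R,\fm,k)$ of mixed characteristic $(0,p)$ with $\dim R = d$, and then construct the desired big Cohen-Macaulay algebra inside an enlargement of the absolute integral closure $R^+$. This reduction is standard: $\fm$-adic completion and passage to a quotient by a minimal prime of maximal dimension both preserve existence of big CM algebras, so no generality is lost in assuming $R$ is a complete local domain with a fixed system of parameters $x_1,\ldots,x_d$.

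The first substantive step is to apply the Fontaine ring and Witt vector construction developed earlier in the paper to produce an $R^+$-algebra $B_0$ in which $x_1,\ldots,x_d$ is \emph{weakly} almost regular with respect to a valuation $v$ extending the $p$-adic valuation. Concretely, every class in the Koszul homology $H_i(x_1,\ldots,x_d;B_0)$ with $i \geq 1$ is killed by elements of arbitrarily small positive $v$-value, and $\fm B_0 \ne B_0$. This is exactly the output of the paper's main construction; it serves as the starting point rather than the endpoint.

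The crucial step, going beyond what the paper proves, is to promote almost regularity to honest regularity. The proposal is to adapt Hochster's modification method: for every nontrivial Koszul relation $\sum_i a_i x_i = 0$ in $B_0$, adjoin formal generators that force triviality, iterate transfinitely, and form the colimit $B$. Regularity of the parameters on $B$ then holds tautologically, so the entire problem reduces to verifying $\fm B \ne B$. The strategy is a descent argument: any equation $1 = \sum m_i b_i$ with $m_i \in \fm$ would, after tracing through the finitely many modifications in which the $b_i$ are born, produce an annihilator of $1$ in $B_0$ of positive but arbitrarily small $v$-value, contradicting $1 \notin \fm B_0$. Making this quantitative requires exploiting the Frobenius-like structure on $B_0$ inherited from the Fontaine ring to track valuations through successive modifications.

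The main obstacle — and the reason the full conjecture genuinely goes beyond the theorem of the paper — is precisely this last quantitative bookkeeping. In equal characteristic $p$, Hochster-Huneke close an analogous argument using Frobenius directly on $R^+$; in mixed characteristic the substitute Frobenius lives on the tilt, and pulling its effect back through the untilting map while preserving $v$-estimates through unboundedly many modifications is delicate. A possible fallback is to stay within the almost category of Gabber-Ramero: establish almost-vanishing of the local cohomology $H^i_{\fm}(B_0)$ for $i < d$, build a big CM \emph{module} by \v{C}ech methods, and then invoke Hochster's module-to-algebra passage. Either way, the gap between almost-vanishing and genuine vanishing is the heart of the matter, and bridging it appears to require input outside the Fontaine-Witt framework used here.
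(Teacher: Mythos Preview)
The statement you are addressing is labelled \emph{Conjecture} in the paper, not \emph{Theorem}. The paper does not prove it; on the contrary, it says explicitly that ``the conjecture remains open for local rings in mixed characteristic of dimension at least~4.'' So there is no proof in the paper to compare your attempt against. What the paper does prove is the Main Theorem (Theorem~\ref{AlmostCM}): every complete local domain of mixed characteristic admits a \emph{weakly almost} Cohen-Macaulay $R^{+}$-algebra~$B$, and the concluding section shows that \emph{if} one additionally knew $p^{\epsilon}\notin (p,x_{2},\ldots,x_{d})B$ for some $\epsilon>0$, \emph{then} $B$ would map to a genuine big Cohen-Macaulay algebra. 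That extra hypothesis is left unverified.

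Your proposal is honest about this: you correctly identify the output of the paper's construction as the starting point, you propose the modification/colimit passage (which is exactly what the paper's final corollary carries out under the extra hypothesis), and you then concede that the ``quantitative bookkeeping'' controlling valuations through the modifications is the unresolved step and ``appears to require input outside the Fontaine--Witt framework used here.'' That concession is accurate, and it means what you have written is not a proof but a programme with an acknowledged gap. The gap is not a technicality: it is precisely the content of the conjecture. In the paper's language, the obstruction is that the weakly almost Cohen-Macaulay algebra $B$ might have $B/\fm B$ almost zero, i.e.\ $\fm B$ might contain elements of arbitrarily small valuation, and nothing in the Fontaine-ring construction rules this out. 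Your suggested fallback via almost local cohomology and the module-to-algebra passage runs into the same wall, since Hochster's passage from big CM modules to big CM algebras already presupposes a genuine (not almost) big CM module.

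In short: there is no proof in the paper for you to match, your outline reproduces the paper's own partial progress and its stated obstruction, and the step you flag as ``delicate'' is the open problem itself.
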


Big Cohen-Macaulay algebras are known to exist for equicharacteristic local rings (see~\cite{Ho75},~\cite{HH2}). In fact, more is true in the equicharacteristic case. That is, the existence of weakly functorial big Cohen-Macaulay algebras was established by Hochster and Huneke based upon their main theorem of~\cite{HH2} together with the reduction of the characteristic zero case to the positive characteristic case via Artin approximation theorem.

Suppose that $R$ is a local domain of mixed characteristic and $\dim R \le 3$. Then it was shown by Hochster~\cite{Ho02} that $R$ has a big Cohen-Macaulay algebra by examining Heitmann's proof of the Monomial Conjecture in dimension 3~\cite{Hei}. Therefore, the conjecture remains open for local rings in mixed characteristic of dimension at least 4. There has not been much progress on the existence problem of big Cohen-Macaulay algebras for several years. However, an approach has been recently found by Roberts~\cite{Ro1},~\cite{Ro2} aiming at finding an almost Cohen-Macaulay algebra, whose existence suffices to prove the Monomial Conjecture in general, and he proves under certain conditions, that almost Cohen-Macaulay algebras exist in mixed characteristic. The importance of almost Cohen-Macaulay algebras in the sense we need was first recognized in Heitmann's theorem quoted above. We also mention that there is an extensive study on closure operations of ideals of Noetherian rings defined by big Cohen-Macaulay algebras~\cite{Die}.

In this article, we attempt to shed some light on the conjecture of Hochster in the mixed characteristic case from a different perspective. First, we make a definition.

Let $(R,\fm)$ be a complete local domain. Then there is a discrete valuation $v:R \to \mathbb{Z} \cup \{\infty \}$ such that $v$ is positive on $R$ and strictly positive on $\fm$. Let $R^{+}$ be the integral closure of $R$ in an algebraic closure of the field of fractions of $R$. Then from a general theory on valuations, there is a valuation $v_{R^{+}}:R^{+} \to \mathbb{Q} \cup \{\infty\}$, which is an extension of $v$ from $R$ to $R^{+}$. For simplicity, we denote this extended valuation $v_{R^{+}}$ by $v$.

\begin{Definition}
Let the notation be as above, let $B$ be an $R^{+}$-algebra, and let $x_1,\ldots,x_d$ be a system of parameters for $R$. Then we say that $x_1,\ldots,x_d$ is \textit{weakly almost regular} on $B$, if $\fm B \ne B$ and for any rational $\epsilon>0$, there exists an element $b \in R^{+}$ such that $v(b)<\epsilon$ and 
$$
b \cdot \frac{(x_1,\ldots,x_i)B:_{B}x_{i+1}}{(x_1,\ldots,x_i)B}=0
$$
for all $0 \le i \le d-1$. An $R^{+}$-algebra $B$ is called \textit{weakly almost Cohen-Macaulay}, if there is a system of parameters for $R$ that is weakly almost regular on $B$.
\end{Definition}

We may choose any fixed valuation on $R^{+}$, as long as it proves desired results. As we noted above, the notion of almost Cohen-Macaulay algebras is suggested by Roberts \cite{Ro2}, where he further assumes that $B/\fm B$ is not almost zero in the sense that every element of $B/\fm B$ is not annihilated by elements of $R^{+}$ with arbitrarily small valuations (see~\cite{Ro2} for a precise definition of almost zero modules). An essential difference between Roberts' definition of almost Cohen-Macaulay algebras and ours is that it is not obvious from our version at all, whether $B/\fm B$ is almost zero, or not (which is why the adverb ``weakly'' appears above). We also remark that Roberts' version leads to a proof of the Monomial Conjecture in mixed characteristic, while our version does not. However, our idea still seems to have some potential force. Here is our main theorem:

\begin{main theorem}
Let $(R,\fm)$ be a complete local domain of mixed characteristic $p > 0$. Then there exist a system of parameters $p,x_{2},\ldots,x_{d}$ of $R$ and a weakly almost Cohen-Macaulay $R^+$-algebra $B$ satisfying the following conditions: 

\begin{enumerate}
\item[$\mathrm{(1)}$]
$(p,x_{2},\ldots,x_{d})B \ne B$;

\item[$\mathrm{(2)}$]
$x_{2},\ldots,x_{d}$ forms a regular sequence on $B/pB$; 

\item[$\mathrm{(3)}$]
$p$ is not nilpotent in $B$ and the ideal $(0:_{B} p)$ is annihilated by $p^{\epsilon}$ for any rational $\epsilon > 0$.

\end{enumerate}
\end{main theorem}

We note that for a valuation $v$ on $R^{+}$, we have $v(p^{\epsilon})=\epsilon \cdot v(p)$. To say that $B$ is a big Cohen-Macaulay algebra ``over $R^+$'' causes no confusion at all, since $R^+$ is the directed union of its module-finite subextensions over $R$. Therefore, $B$ is weakly almost Cohen-Macaulay over any module-finite extension domain of $R$. The difficult part of the above theorem is to show that the element $p$ is ``almost'' regular on $B$. The proof is to reduce to the positive characteristic case via Fontaine rings and we construct a certain perfect algebra over it. Then we lift it to the ring of Witt vectors. We will give a brief review on the theory of Fontaine rings and Witt vectors for the absolute integral closures of complete local domains.

\section{Preliminaries}

In this article, $(R,\fm)$ will denote a local Noetherian ring. Let us say that a domain $A$ has \textit{mixed characteristic} $p>0$ if $A$ has characteristic zero, while $A/pA$ has characteristic $p>0$. Let $x_{1},\ldots,x_{n}$ be a sequence in a ring $A$ and let $N$ be an $A$-module. Then the sequence $x_{1},\ldots,x_{n}$ is said to be \textit{N-regular} if $(x_{1},\ldots,x_{n})N \ne N$ and $x_{k}$ is a nonzero divisor of $N/(x_{1},\ldots,x_{k-1})N$ for all $1 \le k \le n$.

The \textit{absolute integral closure} of an integral domain $R$ is defined to be the integral closure of $R$ in an algebraic closure of its field of fractions and denote it by $R^{+}$. The symbol employed in~\cite{Ar} for $R^{+}$ is different from ours. We will use the following fact later. Let $R$ be any domain and let $P$ be any prime ideal of $R^{+}$. Then we have $R^{+}/P \simeq (R/R \cap P)^{+}$. Let $A$ be any ring of positive characteristic and let $C:=A_{\mathrm{red}}$. Then the \textit{perfect closure} of $A$ is defined as the direct limit of the system on the top or bottom defined by the Frobenius map:
$$
\begin{CD}
C @>>> C^{1/p} @>>> C^{1/p^2} @>>> \cdots \\
@VVV @V \wr V\mathbf{F}V @V \wr V\mathbf{F}^{2}V \\
C @>\mathbf{F}_{C}>> C @>\mathbf{F}_{C}>> C @>\mathbf{F}_{C}>> \cdots \\
\end{CD}
$$
in which the first vertical arrow is the identity map and the horizontal arrows on the top are the natural inclusions. Since the iterates of the Frobenius map annihilates the nilradical of $A$, the perfect closure may also be defined as $\varinjlim_{\mathbf{F}_{A}} A$. Furthermore, let $I \subseteq A$ be an ideal. Then let $I^{[p^e]}$ denote the ideal of $A$ generated by $x^{p^e}$ for all $x \in I$. 

The following theorem states that there are canonical big Cohen-Macaulay algebras in positive characteristic. The advantage of working with the absolute integral closures is that it allows us to extend it to the weakly functorial case, and they are not too large to deal with.

\begin{Theorem}[Hochster,Huneke~\cite{HH1}; Huneke,Lyubeznik~\cite{HL}]
\label{HHHL}
Let $(R,\fm)$ be a local domain of characteristic $p>0$. Assume one of the following conditions:
\begin{enumerate}
\item[$\mathrm{(1)}$]
$R$ is an excellent local domain.

\item[$\mathrm{(2)}$]
$R$ is a homomorphic image of a Gorenstein local ring.
\end{enumerate}
Then every system of parameters for $R$ is a regular sequence on $R^{+}$.
\end{Theorem}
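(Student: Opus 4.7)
The plan is to reduce the regularity assertion to a vanishing theorem in local cohomology for $R^{+}$ and to attack it via Frobenius together with $p^{n}$-th root extraction inside $R^{+}$. First I would reduce to the complete case: under either hypothesis, passing to $\widehat R$ (and taking a minimal prime quotient to retain the domain property) preserves the structural assumption and the given system of parameters, and the conclusion descends from the completion because every module-finite $R$-subalgebra of $R^{+}$ lifts to a module-finite subalgebra of the completed analogue. I may then assume $R$ is a complete local domain of characteristic $p$.

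Second, I would translate the goal into local cohomology: the sequence $x_{1},\ldots,x_{d}$ is $R^{+}$-regular iff $\Hm{\fm}{i}{R^{+}}=0$ for $i<d$. Since $\Hm{\fm}{i}{-}$ commutes with filtered colimits and $R^{+}=\varinjlim S$ over module-finite subdomains $S\subseteq R^{+}$, the problem reduces to showing that every class $\eta\in\Hm{\fm}{i}{S}$ with $i<d$ is killed after passing to some module-finite extension $T\supseteq S$ inside $R^{+}$.

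Third, I would invoke local duality, whose availability is exactly what the two structural hypotheses secure: each yields a normalized dualizing complex $\omega^{\bullet}$ for $R$, so that $\Hm{\fm}{i}{S}^{\vee}\cong\Ext{R}{d-i}{S}{\omega^{\bullet}}$. Killing a local-cohomology class is thereby dualized into killing its image in Ext under pullback along $S\hookrightarrow T$, and the Frobenius $\mathbf{F}$ on $R$ acts functorially on both sides, with its $n$-fold iterate corresponding to the adjunction of $p^{n}$-th roots inside $R^{+}$.

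The main obstacle is producing such a $T$ uniformly for every class $\eta$. The Hochster--Huneke route would use tight closure: find a nonzero test element $c\in R$ annihilating $\Hm{\fm}{i}{R^{+}}$ for $i<d$ via phantom acyclicity applied to the Koszul complex, then iterate Frobenius and adjoin the roots $c^{1/p^{n}}\in R^{+}$ to bootstrap the annihilation down, forcing $\eta=0$. The later Huneke--Lyubeznik route bypasses tight closure entirely, using only the dualizing complex, $\mathbf{F}$-compatibility with local duality, and a direct colon-capturing argument for the plus-closure that handles both structural hypotheses uniformly. In either route, the technical heart is the same principle: $\mathbf{F}$-compatibility of $\Hm{\fm}{i}{S}$ combined with the solvability of $T^{p^{n}}=c$ inside $R^{+}$ drives every prospective obstruction to regularity down to zero.
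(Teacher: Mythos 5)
This theorem is stated in the paper as a cited result from Hochster--Huneke and Huneke--Lyubeznik, with no internal proof, so there is nothing in the paper to match your sketch against; the comparison has to be with the published arguments themselves. The strategic skeleton you lay out --- reduce to the complete case, rephrase as vanishing of $H^{i}_{\fm}(R^{+})$ for $i<d$, pass to the colimit over module-finite subdomains $S\subseteq R^{+}$, dualize, and drive obstructions to zero using Frobenius and $p$-power roots --- is the right one. But two of your steps assert exactly the things that require the real work. Your claim that ``each [hypothesis] yields a normalized dualizing complex'' is wrong for hypothesis (1): by Kawasaki's theorem, possessing a dualizing complex is \emph{equivalent} to being a homomorphic image of a finite-dimensional Gorenstein ring, which is hypothesis (2) and is not a consequence of excellence --- indeed, this logical independence is precisely why the two papers give two separate proofs and why the present theorem carries two hypotheses rather than one. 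You sidestep this by completing first, which does secure a dualizing complex, but it turns your completion step into the load-bearing one, and that step is not a formal descent. For excellent $R$ the delicate point in Hochster--Huneke is to show that vanishing of $H^{i}_{\fm}$ for $(\widehat R/P)^{+}$ (with $\widehat R$ reduced and $P$ a minimal prime) forces the corresponding vanishing for $R^{+}$; this uses geometric regularity of the formal fibers to control $R^{+}\otimes_{R}\widehat R$ and its minimal primes, and it is not captured by ``every module-finite $R$-subalgebra of $R^{+}$ lifts to a module-finite subalgebra of the completed analogue,'' which is a statement about extensions going the wrong way.

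The second gap is that the ``technical heart'' you invoke for the Huneke--Lyubeznik route is itself the missing argument, not a named lemma. Concretely, after local duality converts the map $H^{i}_{\fm}(S)\to H^{i}_{\fm}(T)$ into a map of \emph{finitely generated} Ext modules over the Gorenstein cover $A$, the key move in Huneke--Lyubeznik is to exploit that these are Noetherian $A$-modules carrying a compatible Frobenius action and to run a chain-stabilization (pigeonhole) argument on the $\mathbf{F}$-iterated images, which manufactures a module-finite extension $T$ in $R^{+}$ killing a given class. Without spelling out that finiteness and that stabilization --- and why adjoining $p^{n}$-th roots inside $R^{+}$ realizes the dual of the stabilized Frobenius power --- ``a direct colon-capturing argument for the plus-closure'' is a placeholder rather than a proof. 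In short: your map of the terrain is accurate, but the excellence-to-completion descent and the Noetherian Frobenius pigeonhole are precisely where the mathematics lives, and both are waved through in the proposal.
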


We need the following simple fact for later use.

\begin{Lemma}
\label{completion}
Let $x$ be a regular element in a ring $R$, and let $J$ be an ideal of $R$ such that $R \ne xR+J$. Suppose that $R$ is $x$-adically complete and $R/xR$ is $J$-adically complete. Then $R$ is complete in the $(xR+J)$-adic topology.
\end{Lemma}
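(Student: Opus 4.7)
First, I would observe that the $(xR+J)$-adic topology on $R$ coincides with the one defined by the filtration $\{x^nR+J^n\}_{n\geq 1}$. Writing $\mathfrak{a}=xR+J$, the inclusion $x^nR+J^n\subseteq\mathfrak{a}^n$ is trivial, while $\mathfrak{a}^{2n}\subseteq x^nR+J^n$ follows by splitting the expansion $\mathfrak{a}^{2n}=\sum_{i=0}^{2n}x^iJ^{2n-i}$ at $i=n$: the terms with $i\geq n$ lie in $x^nR$, while the terms with $i<n$ satisfy $J^{2n-i}\subseteq J^n$. Hence it suffices to show that $R=\varprojlim_n R/(x^nR+J^n)$.

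I would reduce this to the following claim: for every $m\geq 1$, the ring $R/x^mR$ is $J$-adically complete. Granting the claim, using the cofinality of the diagonal in $\mathbb{N}^2$ and iterated inverse limits one obtains
\[
\varprojlim_n R/(x^nR+J^n) \;=\; \varprojlim_n\,\varprojlim_k R/(x^nR+J^k) \;=\; \varprojlim_n R/x^nR \;=\; R,
\]
where the second equality uses the $J$-adic completeness of each $R/x^nR$ and the third uses the $x$-adic completeness of $R$.

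The claim would be proved by induction on $m$, the base case $m=1$ being one of the hypotheses. For the inductive step, I would exploit the short exact sequence of $R$-modules
\[
0 \longrightarrow xR/x^{m+1}R \longrightarrow R/x^{m+1}R \longrightarrow R/xR \longrightarrow 0,
\]
together with the $R$-module isomorphism $xR/x^{m+1}R\cong R/x^mR$ induced by multiplication by $x$, whose bijectivity and compatibility with the $J$-adic filtrations both depend on the regularity of $x$. The inductive hypothesis would then make the leftmost term $J$-adically complete, while the rightmost is complete by assumption, and I would combine these to obtain completeness of the middle.

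The main obstacle I anticipate is precisely this final implication in the induction: deducing $J$-adic completeness of the middle term from that of the two outer terms of the short exact sequence. This amounts to reconciling the intrinsic $J$-adic filtration on $xR/x^{m+1}R$ with the subspace filtration it inherits from $R/x^{m+1}R$, which need not agree a priori. The regularity of $x$ should provide the Artin-Rees-type control required to pass a Cauchy sequence in $R/x^{m+1}R$ to a convergent one, thereby finishing the induction.
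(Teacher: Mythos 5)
Your plan follows the same route as the paper: reduce to proving that $R/x^mR$ is $J$-adically complete for every $m$ by induction via a short exact sequence, and then conclude with the same iterated inverse limit that interchanges the two indices and invokes both completeness hypotheses in turn. The only cosmetic difference is which end of the short exact sequence carries the inductive hypothesis: you use $0\to xR/x^{m+1}R\to R/x^{m+1}R\to R/xR\to 0$ with $xR/x^{m+1}R\cong R/x^mR$ (induction on the left, base case on the right), whereas the paper uses $0\to x^{n-1}R/x^nR\to R/x^nR\to R/x^{n-1}R\to 0$ with $x^{n-1}R/x^nR\cong R/xR$ (base case on the left, induction on the right); these are interchangeable and both use regularity of $x$ in the same way. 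The subtlety you flag at the end is real: to pass from $J$-adic completeness of the outer terms to that of the middle term one must know that the $J$-adic filtration on the kernel is cofinal with the filtration it inherits as a subspace of the middle term, i.e.\ one needs control over $J^k\cap x^jR$ versus $x^jJ^k$, and regularity of $x$ alone does not obviously supply this. You should be aware, however, that the paper's own proof --- which simply says ``applying the five-lemma'' --- leaves precisely this point implicit; so your proposal is structurally identical to the paper's, and you have correctly located, but not closed, the one step that the paper also treats telegraphically.
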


\begin{proof}
We first show that $R/x^{n}R$ is $J$-adically complete. If $n=1$, this is so by assumption. For any $k<n$, we assume that $R/x^{k}R$ is $J$-adically complete. Since $x$ is regular, we have
$$
x^{n-1}:R/xR \simeq x^{n-1}R/x^{n}R,
$$
which implies that $x^{n-1}R/x^{n}R$ is $J$-adically complete. Applying the five-lemma to the short exact sequence
$$
\begin{CD}
0 @>>> x^{n-1}R/x^{n}R @>>> R/x^{n}R @>>> R/x^{n-1}R @>>> 0, \\
\end{CD}
$$
we deduce that $R/x^{n}R$ is $J$-adically complete. Finally, we have 
$$
\varprojlim_{k \in \mathbb{N}} R/(xR+J)^{k} \simeq \varprojlim_{m,n \in \mathbb{N}} R/(x^{m}R+J^{n}) \simeq \varprojlim_{m \in \mathbb{N}}\big(\varprojlim_{n \in \mathbb{N}} R/(x^{m}R+J^{n})\big) \simeq \varprojlim_{m \in \mathbb{N}} R/x^{m}R \simeq R,
$$
which is the required claim.
\end{proof}

\section{Algebra modifications and some criteria}

Let $T$ be an algebra over a local ring $(R,\fm)$. We begin with the notion of algebra modifications due to Hochster.

\begin{Definition}[algebra modifications]
Let $x_{1},\ldots,x_{k+1}$ be a sequence in a local ring $(R,\fm)$, and let $t_{1},\ldots,t_{k+1}$ be a sequence in $T$ such that $x_{k+1} t_{k+1}=\sum_{i=1}^{k} x_{i} t_{i}$. Let $X_{1},\ldots,X_{k}$ be a set of indeterminates over $T$. Then we say that 
$$
T'=\frac{T[X_{1},\ldots,X_{k}]}{(t_{k+1}-\sum_{i=1}^{k} x_{i} X_{i})}
$$ 
is an \textit{algebra modification of T}. We define a \textit{sequence of algebra modifications}:
$$
\begin{CD}
T=T_{0} @>>> T_{1} @>>> \cdots @>>> T_{s} @>>> \cdots \\
\end{CD}
$$
such that every $T_{i+1}$ is an algebra modification of $T_{i}$.
\end{Definition}

We will consider the algebra modifications in the case where the sequence $x_{1},\ldots,x_{d}$ is a system of parameters of $R$. The point is that if there is a relation on a system of parameters, one extends the ring to trivialize it. In order to make the notion effective, we shall need to introduce a more universal object (see~\cite{HH2} for more details).

Let $d=\dim R$ for a local ring $(R,\fm)$ and let $\mathcal{F}$ denote a fixed non-empty family of sequences of length $d$, all of which form systems of parameters for $R$.  Let $\mathcal{S}_{T}$ denote the set of all relations of every element of $\mathcal{F}$ for an $R$-algebra $T$, whose precise meaning is as follows: Let $x_{1},\ldots,x_{k+1}$ be an initial segment of an element of $\mathcal{F}$. Then the relation $\lambda: x_{k+1}t_{k+1}-\sum_{i=1}^{k} x_{i}t_{i}=0$ with $t_{i} \in T$ is an object of $\mathcal{S}_{T}$. We refer $t(\lambda)=k$ as a \textit{type} of the relation $\lambda$. Note that $0 \le t(\lambda)<d$. In particular, if $t(\lambda)=0$, then the relation is merely $x t=0$. See~\cite{Ho94} for the following definition.

\begin{Definition}
Let $T$ be an algebra over a local ring $(R,\fm)$. Let $\mathcal{J}(T/R)$ be an ideal of the polynomial algebra $T[X_{\lambda,j}]=T[X_{\lambda,j};\lambda \in \mathcal{S}_{T},1 \le j \le t(\lambda)]$, generated by all the polynomials $(t_{k+1}-\sum_{i=1}^{k} x_{i} X_{\lambda,i})$ associated to the relation $x_{k+1}t_{k+1}=\sum_{i=1}^{k} x_{i}t_{i}$ with $t_{i} \in T$. We set
$$
\Mod(T/R)=\frac{T[X_{\lambda,j}]}{\mathcal{J}(T/R)}.
$$
We define $\Mod_{n}(T/R)$ recursively: $\Mod_{0}(T/R)=T$ and $\Mod_{n+1}(T/R)=\Mod(\Mod_{n}(T/R)/R)$. Finally we let
$$
\Mod_{\infty}(T/R)=\varinjlim_{n \in \mathbb{N}}\Mod_{n}(T/R).
$$
\end{Definition}

We shall denote by $\mathcal{T}$ a single sequence of algebra modifications:
$$
\begin{CD}
T=T_{0} @>>> T_{1} @>>> \cdots @>>> T_{i} @>>> \cdots \\
\end{CD}
$$
over the local ring $(R,\fm)$. We shall say that $\mathcal{T}$ is \textit{bad} if $1 \in \fm T_{s}$ for some $s \ge 0$.

\begin{Remark}
\label{remark}
\begin{enumerate}
\item
The most important case for us is when $\mathcal{F}$ consists of a single system of parameters $x_{1},\ldots,x_{d}$, in which case one keeps track of the relations with respect to an initial segment of $x_{1},\ldots,x_{d}$. The object $\Mod_{\infty}(T/R)$ defined above is a possibly improper big Cohen-Macaulay $R$-algebra with respect to $\mathcal{F}$ meaning that every element of $\mathcal{F}$ is a regular sequence on $\Mod_{\infty}(T/R)$. So the difficulty with $\Mod_{\infty}(T/R)$ is in showing that whether $1 \in (x_{1},\ldots,x_{d})\Mod_{\infty}(T/R)$, or not.

\item
If an $R$-algebra $T$ maps to a possibly improper big Cohen-Macaulay $R$-algebra $W$, then one can map any sequence of algebra modifications of $T$ to $W$. By this fact, we see that $\Mod_{\infty}(T/R) \ne 0$.
\end{enumerate}
\end{Remark}

\begin{Definition}
Let $T$ be an algebra over a local ring $(R,\fm)$ and let $0 \le k < \dim(R)$ be fixed. Let $\mathcal{F}$ be a non-empty fixed family of sequences of $R$. Then we say that a sequence of algebra modifications with respect to $\mathcal{F}$ is \textit{of type $\ge$ k} if every $T_{i+1}$ is a modification of $T_{i}$ with respect to a relation of type at least $k$.
\end{Definition}

In analogy with the construction of $\Mod(T/R)$, we define $\Mod(T/R)_{\ge k}$ as a polynomial algebra over $T$ modulo an ideal, whose generators come from all the relations of type $\ge k$ with respect to sequences in the family $\mathcal{F}$. We may also define $\Mod_{i}(T/R)_{\ge k}$ and $\Mod_{\infty}(T/R)_{\ge k}$ as well. Suppose that
$$
\begin{CD}
T=T_{0} @>>> T_{1} @>>> \cdots @>>> T_{s}
\end{CD}
$$
is a finite sequence of modifications of type $\ge k$ with respect to $\mathcal{F}$. Then we can inductively construct the commutative diagram of $T$-algebra homomorphisms:
$$
\begin{CD}
\Mod_{0}(T/R)_{\ge k} @>>> \Mod_{1}(T/R)_{\ge k} @>>> \cdots @>>> \Mod_{s}(T/R)_{\ge k} \\
@AAA @AAA @. @AAA \\
T_{0} @>>> T_{1} @>>> \cdots @>>> T_{s} \\
\end{CD}
$$
Thus, we have the following proposition.

\begin{Proposition}
\label{type}
Let $T$ be an algebra over a local ring $(R,\fm)$ with a system of parameters $x_{1},\ldots,x_{d}$, and let $0 \le k < \dim R$ be fixed. Then the following conditions are equivalent:

\begin{enumerate}
\item[$\mathrm{(1)}$]
There exists a $T$-algebra $B$ such that $1 \notin (x_{1},\ldots,x_{d})B$ and $x_{k+1},\ldots,x_{d}$ forms a regular sequence on $B/(x_{1},\ldots,x_{k})B$.

\item[$\mathrm{(2)}$]
Suppose that
$$
\begin{CD} 
T=T_{0} @>>> T_{1} @>>> \cdots @>>> T_{s}
\end{CD}
$$
is any finite sequence of modifications of $T$ of type $\ge k$ with respect to $x_{1},\ldots,x_{d}$. Then we have $1 \notin (x_{1},\ldots,x_{d})T_{s}$.
\end{enumerate}

\end{Proposition}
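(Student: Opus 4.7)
The plan is to prove the two implications separately, following the standard pattern in Hochster's theory of algebra modifications. The implication $(1) \Rightarrow (2)$ will go by mapping the whole modification tower into the hypothesized algebra $B$, while $(2) \Rightarrow (1)$ will take $B := \Mod_{\infty}(T/R)_{\geq k}$ itself.

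For $(1) \Rightarrow (2)$, fix a $T$-algebra $B$ as in (1) and an arbitrary finite sequence $T = T_{0} \to T_{1} \to \cdots \to T_{s}$ of modifications of type $\geq k$. I would show by induction on $i$ that the structural map $T \to B$ extends to a $T$-algebra map $\varphi_{i}\colon T_{i} \to B$. The inductive step is the crux: if $T_{i+1}$ is obtained from $T_{i}$ by killing a relation $x_{j+1}t_{j+1} = \sum_{\ell=1}^{j} x_{\ell}t_{\ell}$ with $j \geq k$, then $\varphi_{i}(t_{j+1})$ lies in $((x_{1},\ldots,x_{j})B :_{B} x_{j+1})$. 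Because $j \geq k$ and $x_{k+1},\ldots,x_{d}$ is $B/(x_{1},\ldots,x_{k})B$-regular, that colon ideal equals $(x_{1},\ldots,x_{j})B$, so one can write $\varphi_{i}(t_{j+1}) = \sum_{\ell=1}^{j} x_{\ell} b_{\ell}$ in $B$; sending the indeterminates $X_{\lambda,\ell}$ to $b_{\ell}$ then extends $\varphi_{i}$ to $\varphi_{i+1}$. Once $\varphi_{s}\colon T_{s} \to B$ exists, the hypothesis $1 \notin (x_{1},\ldots,x_{d})B$ immediately forces $1 \notin (x_{1},\ldots,x_{d})T_{s}$.

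For $(2) \Rightarrow (1)$, I would take $B := \Mod_{\infty}(T/R)_{\geq k}$. The regularity condition on $B/(x_{1},\ldots,x_{k})B$ is essentially built in: if $x_{j+1}\beta \in (x_{1},\ldots,x_{j})B$ with $k \leq j < d$ and $\beta \in B$, then $\beta$ and the witnesses already live in some $\Mod_{n}(T/R)_{\geq k}$, giving a type-$j$ relation there; by the very construction of $\Mod_{n+1}(T/R)_{\geq k}$ the element $\beta$ becomes a $T$-linear combination of $x_{1},\ldots,x_{j}$ at the next stage, whence $\beta \in (x_{1},\ldots,x_{j})B$. For the properness statement $1 \notin (x_{1},\ldots,x_{d})B$, any putative equation $1 = \sum x_{\ell} \beta_{\ell}$ in $B$ uses only finitely many of the indeterminates $X_{\lambda,j}$, so it is witnessed in some $\Mod_{n}(T/R)_{\geq k}$, and that finite datum can be reorganized into a finite sequence of \emph{single-relation} modifications $T = T_{0} \to T_{1} \to \cdots \to T_{s}$ of type $\geq k$ that still produces $1 \in (x_{1},\ldots,x_{d})T_{s}$, contradicting (2).

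The only genuinely delicate point is the last step of the previous paragraph, namely the reduction from the omnibus construction of $\Mod_{n}(T/R)_{\geq k}$ (which adjoins variables for all relations of type $\geq k$ at once) to a \emph{finite} tower of single-relation modifications. I would handle this by a standard compactness argument: well-order the generators of the modification ideal $\mathcal{J}(T/R)_{\geq k}$, observe that the hypothetical identity $1 \in (x_{1},\ldots,x_{d})\Mod_{n}(T/R)_{\geq k}$ descends to a finitely generated subalgebra, and then perform the corresponding finitely many single modifications in an arbitrary order. Once this reduction is granted, both directions are routine, and no further input beyond the definitions of $\Mod_{n}(T/R)_{\geq k}$ and of type-$\geq k$ relations is needed.
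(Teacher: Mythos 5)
Your proposal is correct and follows essentially the same route as the paper: both directions use $\Mod_{\infty}(T/R)_{\geq k}$ as the universal object, with $(1)\Rightarrow(2)$ obtained by mapping the modification tower into $B$ and $(2)\Rightarrow(1)$ by taking $B:=\Mod_{\infty}(T/R)_{\geq k}$ and tracing any witness of $1\in(x_1,\ldots,x_d)\Mod_{\infty}(T/R)_{\geq k}$ back to a finite tower of single-relation modifications. Your $(1)\Rightarrow(2)$ step is slightly more self-contained than the paper's (you build the maps $\varphi_i\colon T_i\to B$ directly via the colon-ideal computation rather than invoking a map $\Mod_{\infty}(T/R)_{\geq k}\to B$), and your sketch of the finiteness reduction is somewhat less careful than the paper's explicit backward recursion through the levels $\Mod_s, \Mod_{s-1},\ldots,\Mod_0$, but the underlying idea is the same.
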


\begin{proof}
$(1) \Rightarrow (2)$: This will be done as follows. By assumption, the $T$-algebra $\Mod_{\infty}(T/R)_{\ge k} $ maps to $B$ and there is a $T$-algebra homomorphism from a sequence of modifications $T=T_{0} \to T_{1} \to \cdots \to T_{s}$ to $\Mod_{\infty}(T/R)_{\ge k}$. Hence $1 \notin (x_{1},\ldots,x_{d})T_{s}$.
 
$(2) \Rightarrow (1)$: By the construction of $\Mod_{\infty}(T/R)_{\ge k}$, it suffices  to show that 
$$
1 \notin (x_{1},\ldots,x_{d})\Mod_{\infty}(T/R)_{\ge k}.
$$ 
Suppose the contrary. Then we have $1=\sum_{i=1}^{d}x_{i}t_{i}$ for $t_{i} \in \Mod_{s}(T/R)_{\ge k}$ for some $s \ge 0$. Since we need only finitely many relations used in the construction of $\Mod_{s}(T/R)_{\ge k}$ for the presentation of $1$, we can construct a map: $\Mod_{s-1}(T/R)_{\ge k} \to T_{s}'$ which is just a finite sequence of modifications of $\Mod_{s-1}(T/R)_{\ge k}$ so as to have $1 \in (x_{1},\ldots,x_{d})T_{s}'$. 

All of these modifications can be described using finitely many elements of $\Mod_{s-1}(T/R)_{\ge k}$. All the elements and relations needed will be in a modification of $\Mod_{s-2}(T/R)_{\ge k}$ with respect to only finitely many relations.
Therefore, we may keep track of $\Mod_{i}(T/R)_{\ge k}$ backward until we have arrived at $T=\Mod_{0}(T/R)_{\ge k}$. Hence $T_{s}'$ is obtained from $T$ after finitely many steps of modifications satisfying $1 \in (x_{1},\ldots,x_{d})T_{s}'$, which is a contradiction.
\end{proof}

\section{Fontaine rings of the absolute integral closures and Witt vectors}

In this section, we discuss some structure of Fontaine rings that is not found in ~\cite{GaRa}, since the idea of Fontaine rings is not prevalent in standard commutative algebra. In particular, we show that the Fontaine ring of the absolute integral closure of a complete local domain contains a complete regular local ring, which is constructed as a projective limit defined by the Frobenius map. We then lift the Fontaine ring to the ring of Witt vectors. For the Witt vectors, we refer the reader to Serre's book~\cite{Se}. For a complete theory of Fontaine rings with its relation to Witt vectors that we use, we refer to Gabber and Ramero~\cite{GaRa}. 

Now assume that $(R,\fm)$ is a complete regular local ring of mixed characteristic $p>0$ with perfect residue field and assume that $p,x_{2},\ldots,x_{d}$ is a regular system of parameters of $R$. We denote by $R^{+}$ its absolute integral closure.

\begin{Definition}[Fontaine ring]
Let the notation be as above. Then we define
$$
\mathbf{E}(R^{+})=\varprojlim_{n \in \mathbb{N}} A_{n} 
$$
in which $A_{n}=R^{+}/pR^{+}$ for every $n \in \mathbb{N}$ and $A_{n+1} \to A_{n}$ is the Frobenius map.
\end{Definition}

Any nonzero element of $\mathbf{E}(R^{+})$ is of the form $\langle x \rangle=(x,x^{\frac{1}{p}},x^{\frac{1}{p^2}},\ldots)$. This notation is slightly abused, since there are ambiguities in choosing $p$-power elements. However, as we shall discuss the Fontaine rings for the absolute integral closures exclusively and the special choices of $p$-power roots are not important for us, this will not cause any kind of issues. We define a natural surjective ring homomorphism: 
$$
\Phi_{R^{+}}^{n}:\mathbf{E}(R^{+}) \to R^{+}/pR^{+}
$$
by the rule $(a_{0},a_{1},a_{2},\ldots) \mapsto a_{n}$. We write $\Phi_{R^{+}}:=\Phi_{R^{+}}^{0}$ for simplicity.

\begin{Remark}
\begin{enumerate}
\item
It is easy to see from the definition that $\mathbf{E}(R^{+})$ is a perfect ring of characteristic $p>0$. If $\dim R=1$, then it is known that $\mathbf{E}(R^{+})$ is a valuation ring. In particular, it is a domain (\cite{GaRa}, Lemma 5.2.27).

\item
There is an alternate way of defining Fontaine rings (see~\cite{FonOuy} for the detail). Let $\widehat{R^{+}}$ be the $p$-adic completion of $R^{+}$. Then it is defined as:
$$
\mathbf{E}(R^{+}) \simeq \big((x_{0},x_{1},x_{2},\ldots)~|~x_{i} \in \widehat{R^{+}}, x_{i+1}^{p}=x_{i}\big),
$$
in which the multiplicative structure is given by the one on $\widehat{R^{+}}$ and the additive structure is given by the rule:
$$
(\ldots,x_{m},\ldots)+(\ldots,y_{m},\ldots)=\big(\ldots,\lim_{n \to \infty} (x_{m+n}+y_{m+n})^{p^{n}},\ldots \big).
$$
\end{enumerate}
\end{Remark}

\begin{Lemma}
\label{normal}
Suppose that $A$ is a normal domain, $k > 0$ is an integer, and $p$ is a prime integer. Then we have the following statements:

\begin{enumerate}
\item[$\mathrm{(1)}$]
Suppose that $A$ has mixed characteristic $p>0$ and $x^{p^k}-p=0$ has a root in $A$. Then the $k$-th iterated Frobenius map on $A/pA$ induces an injection: $A/p^{1/p^{k}}A \to A/pA$.

\item[$\mathrm{(2)}$]
Let $A \to B$ be an integral extension of integral domains and let $t \in A$ be any nonzero element. Then the induced map: $A/tA \to B/tB$ is injective.
\end{enumerate}
\end{Lemma}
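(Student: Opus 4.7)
The plan is to treat the two parts separately, with part (2) serving as the conceptual prototype for part (1); both arguments rely crucially on the standing hypothesis that $A$ is a normal domain.

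For (2), I would use the classical fact that for a normal domain $A$ and an integral extension $A \subseteq B$ of domains, one has $tB \cap A = tA$ for every nonzero $t \in A$. Concretely, if $a \in A$ satisfies $a = tb$ for some $b \in B$, then $b = a/t$ lies in $\mathrm{Frac}(A)$, while $b \in B$ is integral over $A$. Normality of $A$ therefore forces $b \in A$, so $a \in tA$, which is exactly the injectivity of $A/tA \to B/tB$.

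For (1), I would fix a root $u \in A$ of $X^{p^k} - p$, so that $u^{p^k} = p$ and $p^{1/p^k}A = uA$. The $k$-th iterate of Frobenius on $A/pA$ sends $\bar a$ to $\overline{a^{p^k}}$; since $u^{p^k} = p \equiv 0 \pmod p$, this map annihilates the image of $uA$ and thus factors through a ring homomorphism
$$
\phi \colon A/uA \longrightarrow A/pA, \qquad \bar a \longmapsto \overline{a^{p^k}}.
$$
Well-definedness is a short check using the elementary congruence $(a-a')^{p^k} \equiv a^{p^k} - (a')^{p^k} \pmod{p}$ (divisibility of the intermediate binomial coefficients by $p$), applied with $a - a' = uc$ and $(uc)^{p^k} = p\, c^{p^k}$.

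The substantive step is injectivity of $\phi$, and it is here that the argument for (2) is reused. If $a \in A$ satisfies $a^{p^k} = p\alpha$ for some $\alpha \in A$, then in $\mathrm{Frac}(A)$ one computes $(a/u)^{p^k} = a^{p^k}/u^{p^k} = \alpha \in A$, so $a/u$ is a root of the monic polynomial $X^{p^k} - \alpha \in A[X]$ and is therefore integral over $A$. Normality of $A$ forces $a/u \in A$, i.e.\ $a \in uA$, which is precisely what is required. The only nontrivial ingredient in either part is the use of normality to descend a fractional integral element into $A$; I do not anticipate any serious obstacle beyond carefully setting up the map $\phi$ and its well-definedness.
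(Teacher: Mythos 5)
Your proposal is correct and follows essentially the same route as the paper: in both parts the key step is that an element of $\mathrm{Frac}(A)$ which is integral over $A$ (as a quotient by $t$, or as a $p^k$-th root of an element of $A$) must lie in $A$ by normality. The paper leaves (2) as an exercise and gives a slightly more terse version of (1), writing $t = p^{1/p^k}\tilde\theta$ with $\tilde\theta$ a root of $x^{p^k}-\theta$; your argument is the same, spelling out well-definedness of the induced map $A/uA \to A/pA$ a bit more carefully.
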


\begin{proof}
$(1)$: We denote by $\mathbf{F}_A^{k}:A/pA \to A/pA$ the $k$-th iterated Frobenius map. Assume that $\mathbf{F}_A^{k}(\overline{t})=0$ for $t \in A$. Then we have $t^{p^k}=p \cdot \theta$ for some $\theta \in A$ and $t=p^{1/p^k} \cdot \tilde{\theta}$ in $K$, where $K$ is the algebraic closure of the field of fractions of $A$ and $\tilde{\theta}$ is a root of the equation $x^{p^k}-\theta=0$. But since $\tilde{\theta}=t \cdot p^{-1/p^k} \in A[p^{-1}]$, we get $\tilde{\theta} \in A$ by the normality of $A$. Hence $t \in p^{1/p^k}A$.

$(2)$: This is an easy exercise using the normality of $A$.
\end{proof}

This immediately implies the following. Let $B$ be any domain of mixed characteristic $p>0$. Then the Frobenius map $B^{+}/pB^{+} \to B^{+}/pB^{+}$ induces a ring isomorphism: 
$$
B^{+}/p^{1/p}B^{+} \simeq B^{+}/pB^{+}.
$$

Recall that $R$ is a complete regular local ring with a regular system of parameters $p,x_{2},\ldots,x_{d}$. Then we may define a sequence of module-finite extensions of regular local rings:
$$
\begin{CD}
R=R_{0} @>>> R_{1} @>>> R_{2} @>>> \cdots \\
\end{CD}
$$
such that $R_{n}:=R[p^{1/p^n},x_{2}^{1/p^n},\ldots,x_{d}^{1/p^n}]$, and the Frobenius map surjects $R_{n+1}/pR_{n+1}$ onto $R_{n}/pR_{n}$, which is naturally the subring of $R_{n+1}/pR_{n+1}$.

\begin{Definition}[small Fontaine ring]
Let $(R,\fm)$ be as above. Then we define
$$
\mathbf{E}(R)^{\times}=\varprojlim_{n \in \mathbb{N}} A_{n} 
$$
such that $A_{n}=R_{n}/pR_{n}$ and $A_{n+1} \to A_{n}$ is the Frobenius map.
\end{Definition}

Note that there is the following sequence of isomorphisms of local rings defined by the Frobenius map:
$$
R_{k+1}/p^{1/p^k}R_{k+1} \simeq R_{k}/p^{1/p^k}R_{k} \simeq \cdots \simeq  R/pR,
$$
and thus the sequence $\langle p \rangle,\langle x_{2} \rangle,\ldots, \langle x_{d} \rangle$ is contained in $\mathbf{E}(R)^{\times}$. For a sequence $(R_{k}~|~k \in \mathbb{N})$ as above, we define a natural ring homomorphism:
$$
\Phi_{R}^{k}:\mathbf{E}(R)^{\times} \to R_{k}/pR_{k}
$$
by the rule $(a_{0},a_{1},a_{2},\ldots) \mapsto a_{k}$, and this map is surjective for all $n \ge 0$. Some important properties of the (small) Fontaine rings are contained in the following proposition.

\begin{Proposition}
\label{Fontaine}
Let the notation be as above.

\begin{enumerate}
\item[$\mathrm{(1)}$]
$\mathbf{E}(R^{+})$ is a $\langle p \rangle$-adically complete quasilocal algebra and fits into the following short exact sequence:
$$
\begin{CD}
0 @>>> \mathbf{E}(R^{+}) @>\langle p \rangle>> \mathbf{E}(R^{+}) @>>> R^{+}/pR^{+} @>>> 0. \\
\end{CD}
$$
In particular, $\langle p \rangle$ is a nonzero divisor of $\mathbf{E}(R^{+})$.

\item[$\mathrm{(2)}$]
$\mathbf{E}(R)^{\times}$ is a complete regular local ring such that $\langle p \rangle,\langle x_{2} \rangle,\ldots,\langle x_{d} \rangle$ is a regular system of parameters and the residue field of $\mathbf{E}(R)^{\times}$ is the same as that of $R$. Finally, there is a short exact sequence:
$$
\begin{CD}
0 @>>> \mathbf{E}(R)^{\times} @>\langle p \rangle>> \mathbf{E}(R)^{\times} @>>> R/pR @>>> 0. \\
\end{CD}
$$
\end{enumerate}
\end{Proposition}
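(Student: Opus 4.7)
The plan is to establish the two short exact sequences first, from which the remaining structural claims follow. The central tool is Lemma~\ref{normal}, which in a normal domain $A$ containing $p^{1/p^k}$ forces $\alpha^{p^k} \in pA \Rightarrow \alpha \in p^{1/p^k} A$. I will use the alternate description of $\mathbf{E}(R^+)$ from the Remark as compatible $p$-power-root systems $(x_n)$ in $\widehat{R^+}$ with $x_{n+1}^p = x_n$, together with an analogous second model for $\mathbf{E}(R)^\times$ consisting of sequences $(x_n)$ with $x_n \in R_n$ and $x_{n+1}^p = x_n$; this analog is available because each $R_n$ is module-finite over the $p$-adically complete ring $R$, hence itself $p$-adically complete.

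For Part (1), surjectivity of $\Phi_{R^+}$ is immediate: any lift $\tilde b \in R^+$ of $b \in R^+/pR^+$ admits a compatible chain of $p^n$-th roots in $R^+$ (since $R^+$ is absolutely integrally closed), whose reductions mod $p$ give a preimage. For $\ker \Phi_{R^+} = \langle p \rangle \mathbf{E}(R^+)$, if $(x_n) \in \ker \Phi_{R^+}$ in the second model, then $x_0 \in p\widehat{R^+}$, so $x_n^{p^n} = x_0 \in p\widehat{R^+}$; Lemma~\ref{normal}(1) applied to $\widehat{R^+}$ gives $x_n = p^{1/p^n} y_n$ with $y_n \in \widehat{R^+}$, and cancelling $p^{1/p^{n-1}}$ in $x_n^p = x_{n-1}$ yields $y_n^p = y_{n-1}$. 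Hence $(y_n) \in \mathbf{E}(R^+)$ and $\langle p \rangle \cdot (y_n) = (x_n)$. Injectivity of $\langle p \rangle$ follows because each $p^{1/p^n}$ is a nonzerodivisor in $\widehat{R^+}$. The $\langle p \rangle$-adic completeness of $\mathbf{E}(R^+) = \varprojlim R^+/pR^+$ follows since, by iterating the short exact sequence, the inverse-limit filtration is cofinal with the $\langle p \rangle$-adic filtration. Quasi-localness is inherited from $R^+/pR^+$: an element of $\mathbf{E}(R^+)$ is a unit iff its image under $\Phi_{R^+}$ is a unit, and $R^+$ itself is quasi-local as the absolute integral closure of a complete local domain.

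For Part (2), the same arguments apply with $R_n$ in the role of $R^+$; each $R_n$ is a complete regular local (hence normal) domain containing $p^{1/p^n}$, so Lemma~\ref{normal} applies verbatim and the analogous second model for $\mathbf{E}(R)^\times$ is available. Surjectivity of $\Phi_R^0$ uses that in $R_1/pR_1$ a $p$-th root of any $b = \sum c_I x_2^{i_2}\cdots x_d^{i_d} \in R/pR = k[[x_2,\ldots,x_d]]$ is $\sum c_I^{1/p} x_2^{i_2/p}\cdots x_d^{i_d/p}$ (using perfectness of $k$ and the adjoined $p$-th roots), iterated to produce a full compatible sequence. The resulting short exact sequence identifies $\mathbf{E}(R)^\times/\langle p \rangle \mathbf{E}(R)^\times$ with $R/pR$, a complete regular local ring of dimension $d-1$, in which $\langle x_i \rangle$ reduces to $x_i$; thus $\langle p \rangle, \langle x_2 \rangle, \ldots, \langle x_d \rangle$ is a regular sequence in $\mathbf{E}(R)^\times$ whose quotient is $R/(p, x_2, \ldots, x_d) = k$. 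Applying Lemma~\ref{completion} with $x = \langle p \rangle$ and $J = (\langle x_2 \rangle, \ldots, \langle x_d \rangle)$ shows $\mathbf{E}(R)^\times$ is complete in the maximal ideal. Cohen's structure theorem for equicharacteristic complete local rings then identifies $\mathbf{E}(R)^\times \simeq k[[T_1, \ldots, T_d]]$, a complete regular local ring of dimension $d$ with the stated regular system of parameters and residue field $k$, the same as $R$.

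The main obstacle is the kernel computation: naively setting $a_n = p^{1/p^n} c_n$ in $R^+/pR^+$ only defines $c_n$ modulo $p^{1-1/p^n}$, so $c_{n+1}^p = c_n$ fails exactly in the first model. The resolution is to work in $\widehat{R^+}$ or $R_n$, where $p^{1/p^n}$ is a nonzerodivisor, so that cancellation delivers compatibility on the nose. A subsidiary technical point is that $\widehat{R^+}$ must be a normal domain for Lemma~\ref{normal} to apply there, which is standard in the setup of complete local domains of mixed characteristic.
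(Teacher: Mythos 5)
Your proof is a genuinely different route from the paper's. For Part (1), the paper simply cites Gabber--Ramero (Proposition 5.2.33); you instead argue from scratch via the alternate ``$p^\infty$-root'' model in $\widehat{R^+}$ together with Lemma~\ref{normal}. For Part (2), the paper deduces exactness at the middle from the inclusion $\mathbf{E}(R)^\times \subset \mathbf{E}(R^+)$ plus a Gabber--Ramero diagram, then invokes Lemma~\ref{completion} and Matsumura~29.4; you try to redo the whole kernel computation in an analogous ``second model'' for $\mathbf{E}(R)^\times$ with entries in $R_n$. The direct approach is more self-contained but it needs more care than you give it, and as written there are two gaps.

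First, the kernel computation for Part (1) applies Lemma~\ref{normal} to $\widehat{R^+}$, which requires $\widehat{R^+}$ to be a \emph{normal domain}. You wave this off as ``standard in the setup,'' but it is not: even the fact that $\widehat{R^+}$ is a domain, for $\dim R > 1$, is a nontrivial theorem that is usually \emph{deduced} from the Fontaine-ring machinery being built here (via $\widehat{R^+} \simeq W(\mathbf{E}(R^+))/\vartheta$), so invoking it to prove Proposition~\ref{Fontaine} risks circularity, and normality is an additional question beyond domain-hood. The gap is repairable: apply Lemma~\ref{normal}(1) to $R^+$ itself (unambiguously normal) to get $\bar x_n = p^{1/p^n}\bar y_n$ in $R^+/pR^+ = \widehat{R^+}/p\widehat{R^+}$; then lift $\bar y_n$ to $y_n \in \widehat{R^+}$ and absorb the error, writing $x_n = p^{1/p^n}(y_n + p^{1-1/p^n}z_n)$, after which cancellation of $p^{1/p^{n-1}}$ (a nonzerodivisor in $\widehat{R^+}$ since $\widehat{R^+}$ is $p$-adically separated --- no normality needed) gives compatibility. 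You should say this rather than assume $\widehat{R^+}$ is normal.

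Second, the ``second model'' you posit for $\mathbf{E}(R)^\times$ --- sequences $(x_n)$ with $x_n \in R_n$ and $x_{n+1}^p = x_n$ --- is not obviously (and I believe not actually) isomorphic to $\varprojlim_{\mathbf F} R_n/pR_n$. $p$-adic completeness of each $R_n$ is not the issue: the canonical compatible lift $x_n = \lim_m a_{n+m}^{p^m}$ lands in $\widehat{\bigcup_m R_m}$, not in $R_n$, and its $p$-adic expansion typically involves fractional powers of $x_2,\ldots,x_d$ not present in $R_n$. So surjectivity of the map from your proposed model onto $\mathbf{E}(R)^\times$ is unproved and dubious. The kernel computation for Part (2) can still be carried out directly in the first model: if $\bar a_0 = 0$, then $\bar a_n^{p^n} = 0$ in $R_n/pR_n \simeq k[[x_2^{1/p^n},\ldots,x_d^{1/p^n}]][u]/(u^{p^n})$ with $u = p^{1/p^n}$, forcing $\bar a_n = u\,\bar b_n$; one then replaces $\bar b_n$ by $\bar b_{n+1}^{p}$ to obtain on-the-nose Frobenius compatibility. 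This is precisely the fix-up you were trying to avoid by introducing the second model, but the second model does not exist as you described it, so the fix-up (or the paper's route through $\mathbf{E}(R^+)$) is needed. A smaller point: you invoke ``Cohen's structure theorem'' to identify $\mathbf{E}(R)^\times$ with $k[[T_1,\ldots,T_d]]$, but Cohen's theorem requires Noetherianness, which you have not established; the paper closes this with (\cite{Mat}, Theorem~29.4) after Lemma~\ref{completion}.
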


\begin{proof}
$(1)$: This is the content of (\cite{GaRa}, Proposition 5.2.33).

$(2)$: First, we show that the sequence 
$$
\begin{CD}
0 @>>> \mathbf{E}(R)^{\times} @>\langle p \rangle >> \mathbf{E}(R)^{\times} @>\Phi_{R}>> R/pR @>>> 0
\end{CD}
$$
is short exact. It only suffices to show that the middle part in the sequence is exact, since $\mathbf{E}(R)^{\times}$ is a subring of $\mathbf{E}(R^{+})$. This follows easily by looking into the following sequence of isomorphisms: 
$$
R_{k+1}/p^{1/p^{k+1}}R_{k+1} \simeq R_{k}/p^{1/p^{k}}R_{k} \simeq \cdots \simeq R/pR.
$$ 
Next, as discussed in~\cite{GaRa}, there is the following commutative diagram: 
$$ 
\begin{CD}
\mathbf{E}(R)^{\times} @>\langle p \rangle^{p^{k+1}}>> \mathbf{E}(R)^{\times} @>\Phi^{k+1}_{R}>> R_{k+1}/pR_{k+1} @>>> 0 \\
@V \langle p \rangle^{p^{k}(p-1)} VV @| @V\mathbf{F}VV \\
\mathbf{E}(R)^{\times} @>\langle p \rangle^{p^k}>> \mathbf{E}(R)^{\times} @>\Phi^{k}_{R}>> R_{k}/pR_{k} @>>> 0
\end{CD}
$$
where every row is an exact sequence and $\mathbf{F}$ is the Frobenius map. Thus we deduce the following isomorphism:
$$
\varprojlim_{k \in \mathbb{N}} \mathbf{E}(R)^{\times}/\langle p \rangle^{p^k}\mathbf{E}(R)^{\times} \simeq \mathbf{E}(R)^{\times}, 
$$
showing that $\mathbf{E}(R)^{\times}$ is $\langle p \rangle$-adically complete and separated. Moreover, we have
$$
\mathbf{E}(R)^{\times}/\langle p \rangle\mathbf{E}(R)^{\times} \simeq R/pR \simeq k[[x_{2},\ldots,x_{d}]]
$$
from the above short exact sequence. Therefore, we find that $(\langle p \rangle,\langle x_{2} \rangle,\ldots,\langle x_{d} \rangle)$ is a maximal ideal of $\mathbf{E}(R)^{\times}$ and $\dim \mathbf{E}(R)^{\times} \ge d$. To complete the proof, it suffices to show that $\mathbf{E}(R)^{\times}$ is a complete local Noetherian ring. Lemma~\ref{completion} shows that $\mathbf{E}(R)^{\times}$ is complete and separated in the $(\langle p \rangle,\langle x_{2} \rangle,\ldots,\langle x_{d} \rangle)$-adic topology, which also shows that $(\langle p \rangle,\langle x_{2} \rangle,\ldots,\langle x_{d} \rangle)$ is the unique maximal ideal. Now (\cite{Mat}, Theorem 29.4) shows that $R$ is Noetherian, as claimed.
\end{proof}

Henceforth, we will view $\mathbf{E}(R)^{\times} \to \mathbf{E}(R^{+})$ as a structure homomorphism of the Fontaine ring of $R^{+}$, which reduces modulo $\langle p \rangle$ to an integral extension $R/pR \to R^{+}/pR^{+}$. By some calculated examples of Roberts~\cite{Ro2}, the Krull dimension of a ring $A$ with $\mathbf{E}(R)^{\times} \subseteq A \subseteq \mathbf{E}(R^{+})$ may be quite large.

Towards the construction of almost Cohen-Macaulay algebras in mixed characteristic, it is necessary to use the theory of the ring of Witt vectors to lift rings of positive characteristic to rings of mixed characteristic. Let $W(\mathbf{E}(R^{+}))$ denote the ring of Witt vectors. Now since $\mathbf{E}(R^{+})$ is a perfect algebra, it follows that $W(\mathbf{E}(R^{+}))$ is a $p$-adically complete and separated algebra, there is an isomorphism: $W(\mathbf{E}(R^{+}))/p \cdot W(\mathbf{E}(R^{+})) \simeq \mathbf{E}(R^{+})$, and $p$ is a nonzero divisor of $W(\mathbf{E}(R^{+}))$. Moreover, Lemma~\ref{completion} shows that $W(\mathbf{E}(R^{+}))$ is complete and separated in the $(p,\langle p \rangle)$-adic topology. Let $\widehat{R^{+}}$ be the $p$-adic completion of $R^{+}$. For the proof of the following lemma, see (\cite{GaRa}, Proposition 5.2.21).

\begin{Lemma}
\label{Witt}
Keeping the notation as above, there is a commutative diagram:
$$
\begin{CD}
\mathbf{E}(R^{+}) @>\widehat{\Phi}_{R^{+}}>> \widehat{R^{+}} \\
@| @V \pi VV \\
\mathbf{E}(R^{+}) @>\Phi_{R^{+}}>> R^{+}/pR^{+}
\end{CD}
$$
in which $\pi$ is the natural projection and $\widehat{\Phi}_{R^{+}}$ is uniquely determined such that $\widehat{\Phi}_{R^{+}}(\langle 1 \rangle)=1$ and $\widehat{\Phi}_{R^{+}}$ is a multiplicative map. In fact, we have $\widehat{\Phi}_{R^{+}}(\langle x\rangle)=x$ for any lift $\langle x \rangle \in \mathbf{E}(R^{+})$ of every element $x \in R^{+}$.
\end{Lemma}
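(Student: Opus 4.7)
The plan is to construct $\widehat{\Phi}_{R^{+}}$ by the standard Teichm\"uller-style formula. For $\langle x \rangle = (x_{0},x_{1},x_{2},\ldots) \in \mathbf{E}(R^{+})$, I would pick any lifts $\tilde{x}_{n} \in \widehat{R^{+}}$ of $x_{n}\in R^{+}/pR^{+}$ and set
$$
\widehat{\Phi}_{R^{+}}(\langle x \rangle) := \lim_{n \to \infty} \tilde{x}_{n}^{p^{n}}.
$$
The first task is to show this limit exists in $\widehat{R^{+}}$ and is independent of the chosen lifts. Both rest on the elementary identity that in any commutative ring $A$, if $a \equiv b \pmod{p A}$ then $a^{p^{k}} \equiv b^{p^{k}} \pmod{p^{k+1}A}$, proved by induction using the binomial theorem and the divisibility $p \mid \binom{p}{j}$ for $0<j<p$. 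Applied to $\tilde{x}_{n+1}^{p} \equiv \tilde{x}_{n} \pmod{p\widehat{R^{+}}}$, this yields $\tilde{x}_{n+1}^{p^{n+1}} \equiv \tilde{x}_{n}^{p^{n}} \pmod{p^{n+1}\widehat{R^{+}}}$, so the sequence is Cauchy and converges by $p$-adic completeness of $\widehat{R^{+}}$; independence of the lift follows by the same congruence applied to $\tilde{x}_{n} - \tilde{x}_{n}' \in p\widehat{R^{+}}$.

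Next I would verify the asserted properties in turn. Commutativity of the diagram is immediate, since the limit reduces modulo $p$ to $\tilde{x}_{0} = x_{0} = \Phi_{R^{+}}(\langle x\rangle)$. Multiplicativity follows because if $\tilde{x}_{n}$ and $\tilde{y}_{n}$ lift $x_{n}$ and $y_{n}$, then $\tilde{x}_{n}\tilde{y}_{n}$ lifts $x_{n}y_{n}$, and the product of limits is the limit of products in $\widehat{R^{+}}$. The normalization $\widehat{\Phi}_{R^{+}}(\langle 1 \rangle) = 1$ is obtained by taking $\tilde{x}_{n} = 1$ for every $n$.

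For uniqueness, I would exploit that $\mathbf{E}(R^{+})$ is perfect: every $\langle x \rangle$ has a unique $p^{n}$-th root $\langle x \rangle^{1/p^{n}} = (x_{n},x_{n+1},\ldots)$. If $f$ is any multiplicative map lifting $\Phi_{R^{+}}$ with $f(\langle 1\rangle)=1$, then $f(\langle x \rangle^{1/p^{n}})$ is some lift of $x_{n}$, and multiplicativity forces $f(\langle x \rangle) = f(\langle x \rangle^{1/p^{n}})^{p^{n}}$. By the independence-of-lifts congruence already proved, the right-hand side agrees with $\widehat{\Phi}_{R^{+}}(\langle x \rangle)$ modulo $p^{n+1}$; letting $n \to \infty$ gives $f = \widehat{\Phi}_{R^{+}}$. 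Finally, for $x \in R^{+}$ and a lift $\langle x \rangle = (x, x^{1/p}, x^{1/p^{2}},\ldots)$ assembled from a compatible system of $p^{n}$-th roots in $R^{+}$, the explicit choice $\tilde{x}_{n} = x^{1/p^{n}} \in \widehat{R^{+}}$ gives $\tilde{x}_{n}^{p^{n}} = x$ on the nose, so the limit equals $x$.

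The only conceptual obstacle I anticipate is ensuring that the key congruence $a \equiv b \pmod{p} \Rightarrow a^{p} \equiv b^{p} \pmod{p^{2}}$ goes through without assuming anything about $\widehat{R^{+}}$ beyond its being a commutative ring; since this is purely the binomial theorem, no $p$-torsion-freeness or flatness of the completion is needed. Everything else is the standard limit argument from Witt vector theory, and this is essentially the content of (\cite{GaRa}, Proposition 5.2.21).
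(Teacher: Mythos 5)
Your proof is correct and follows the standard Witt-vector argument for constructing the multiplicative lift of $\Phi_{R^{+}}$: build $\widehat{\Phi}_{R^{+}}$ as a $p$-adic limit of $p^{n}$-th powers of lifts, justify convergence and well-definedness with the congruence $a\equiv b \pmod{p} \Rightarrow a^{p^{k}}\equiv b^{p^{k}} \pmod{p^{k+1}}$, and deduce uniqueness by exploiting perfectness of $\mathbf{E}(R^{+})$ and multiplicativity to reduce any competing $f$ to the same limit. The paper gives no independent proof, deferring entirely to Gabber--Ramero, Proposition 5.2.21, and your argument is precisely the content of that reference.
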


Let $\theta_{\mathbf{E}(R^{+})}:\mathbf{E}(R^{+}) \to W(\mathbf{E}(R^{+}))$ be defined by $\theta_{\mathbf{E}(R^{+})}(a)=(a,0,\ldots,0,\ldots)$. This map is multiplicative and is called the \textit{Teichm$\ddot{u}$ller lift}. However, $\theta_{\mathbf{E}(R^{+})}$ is not additive. 

Let now $\underline{a}=(a_{0},\ldots,a_{n},\ldots) \in W(\mathbf{E}(R^{+}))$. Since $\mathbf{E}(R^{+})$ is a perfect ring, for every $a \in \mathbf{E}(R^{+})$, we find a unique element $x \in \mathbf{E}(R^{+})$ such that $x^{p^n}=a$. Denote this element by $a^{p^{-n}}$. Then we define the map:
$$
\psi:W(\mathbf{E}(R^{+})) \to \widehat{R^{+}}
$$
by the rule 
$$
\psi(\underline{a})=\sum_{n=0}^{\infty}p^{n} \cdot \widehat{\Phi}_{R^{+}}(a_{n}^{p^{-n}}),
$$
in which the right hand side makes sense in $\widehat{R^{+}}$. It is easy to check that $\psi \circ \theta_{\mathbf{E}(R^{+})}=\widehat{\Phi}_{R^{+}}$, where $\theta_{\mathbf{E}(R^{+})}$ is as above. The next proposition claims that $\psi$ defines a surjective ring homomorphism (see~\cite{GaRa}, Proposition 5.2.33).

\begin{Proposition}
\label{Teichmuller}
Under the notation as above, the map $\psi:W(\mathbf{E}(R^{+})) \to \widehat{R^{+}}$ is a ring homomorphism that fits into a short exact sequence:
$$
\begin{CD}
0 @>>> W(\mathbf{E}(R^{+})) @>\vartheta>> W(\mathbf{E}(R^{+})) @>>> \widehat{R^{+}} @>>> 0 \\
\end{CD}
$$
for $\vartheta:=\theta_{\mathbf{E}(R^{+})}(\langle p \rangle)-p$. Moreover, $\vartheta, p$ forms a regular sequence on $W(\mathbf{E}(R^{+}))$. 
\end{Proposition}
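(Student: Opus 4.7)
The plan breaks into three tasks: verify that $\psi$ is a ring homomorphism, identify the displayed sequence as exact with the indicated kernel, and check that $\vartheta, p$ forms a regular sequence on $W(\mathbf{E}(R^+))$.

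For the ring-homomorphism property of $\psi$, the cleanest approach is to invoke the universal lifting property of Witt vectors of perfect rings: for any $p$-adically complete, $p$-torsion-free ring $S$ and any ring homomorphism $f : \mathbf{E}(R^+) \to S/pS$, there is a unique ring homomorphism $\tilde{f} : W(\mathbf{E}(R^+)) \to S$ that reduces to $f$ modulo $p$. Applying this with $S = \widehat{R^+}$ and $f = \Phi_{R^+}$, I would first observe that the hypotheses are met because $R^+$ is a $p$-torsion-free domain, from which a direct inverse-limit calculation shows that $p$ is a nonzerodivisor on $\widehat{R^+} = \varprojlim R^+/p^n R^+$. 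Uniqueness of the multiplicative lift of $\Phi_{R^+}$ (which is $\widehat{\Phi}_{R^+}$, by Lemma~\ref{Witt}) together with the unique Teichmüller expansion $\underline{a} = \sum_n p^n \theta_{\mathbf{E}(R^+)}(a_n^{p^{-n}})$ then forces the resulting ring homomorphism to coincide with $\psi$. Surjectivity of $\psi$ follows by iteration: given $\hat{x} \in \widehat{R^+}$, use surjectivity of $\Phi_{R^+}$ (Proposition~\ref{Fontaine}(1)) to pick $b_0 \in \mathbf{E}(R^+)$ with $\widehat{\Phi}_{R^+}(b_0) \equiv \hat{x} \pmod p$, write $\hat{x} - \widehat{\Phi}_{R^+}(b_0) = p\hat{x}_1$, and continue, obtaining $\hat{x} = \psi\bigl(\sum_n p^n \theta_{\mathbf{E}(R^+)}(b_n)\bigr)$.

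To identify the kernel, the inclusion $(\vartheta) \subseteq \ker\psi$ is immediate since $\psi(\theta_{\mathbf{E}(R^+)}(\langle p\rangle)) = \widehat{\Phi}_{R^+}(\langle p\rangle) = p$. For the reverse inclusion I would reduce modulo $p$: the induced map $\bar\psi : W(\mathbf{E}(R^+))/pW(\mathbf{E}(R^+)) \simeq \mathbf{E}(R^+) \to \widehat{R^+}/p\widehat{R^+} \simeq R^+/pR^+$ is precisely $\Phi_{R^+}$, whose kernel is $\langle p\rangle \mathbf{E}(R^+)$ by Proposition~\ref{Fontaine}(1). Since $\vartheta \equiv \langle p\rangle \pmod p$, any $w \in \ker\psi$ can be written $w = \vartheta w_0 + p w_1$; the relation $p\psi(w_1) = \psi(pw_1) = \psi(w - \vartheta w_0) = 0$ together with $p$-regularity on $\widehat{R^+}$ forces $w_1 \in \ker\psi$. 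Iterating and passing to the $p$-adic limit in the $p$-adically complete ring $W(\mathbf{E}(R^+))$ expresses $w$ as $\vartheta \cdot \sum_n p^n w_0^{(n)} \in (\vartheta)$.

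For the regular-sequence claim, I would argue in two stages. If $\vartheta w = 0$, reducing modulo $p$ yields $\langle p\rangle \bar w = 0$ in $\mathbf{E}(R^+)$, whence $\bar w = 0$ by Proposition~\ref{Fontaine}(1), so $w = p w_1$; cancelling $p$ (a nonzerodivisor on $W(\mathbf{E}(R^+))$ by the general property of Witt rings of characteristic-$p$ algebras) produces $\vartheta w_1 = 0$, and iteration places $w$ in $\bigcap_n p^n W(\mathbf{E}(R^+)) = 0$ by $p$-adic separatedness, so $\vartheta$ is regular. That $p$ is regular on $W(\mathbf{E}(R^+))/(\vartheta) \simeq \widehat{R^+}$ was already established above. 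The main technical obstacle I foresee is establishing additivity of $\psi$: invoking the universal property of $W$ on perfect rings circumvents this cleanly, but a hands-on proof would require extracting additivity from the Witt addition polynomials together with $p$-adic convergence in $\widehat{R^+}$, which is the heart of the classical Teichmüller-expansion calculation.
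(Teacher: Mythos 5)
The paper gives no proof of this proposition: it is stated immediately after a pointer to \cite{GaRa}, Proposition 5.2.33, and is thereafter used as a black box. So you are not competing with an argument in the paper; you are supplying one. That said, your route is the standard one and it is correct. Characterizing $\psi$ through the universal lifting property of $W(-)$ for a perfect $\mathbb{F}_p$-algebra against a $p$-adically complete, $p$-torsion-free target, proving surjectivity by successive approximation, identifying $\ker\psi=(\vartheta)$ via $\ker\Phi_{R^{+}}=\langle p\rangle\mathbf{E}(R^{+})$ (Proposition~\ref{Fontaine}(1)) together with $p$-adic completeness of $W(\mathbf{E}(R^{+}))$, and bootstrapping the $\langle p\rangle$-regularity on $\mathbf{E}(R^{+})$ to $\vartheta$-regularity on $W(\mathbf{E}(R^{+}))$ by $p$-adic separatedness --- this is exactly how the statement is proved in the $p$-adic Hodge theory literature, and every step you give goes through.

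One justification should be tightened. You attribute $p$-torsion-freeness of $W(\mathbf{E}(R^{+}))$ to ``the general property of Witt rings of characteristic-$p$ algebras.'' That is false as stated: for a non-reduced $\mathbb{F}_p$-algebra $A$, $W(A)$ does have $p$-torsion; for instance with $A=\mathbb{F}_p[\epsilon]/(\epsilon^{2})$ one computes $p\cdot[\epsilon]=V(F[\epsilon])=V([\epsilon^{p}])=V(0)=0$. The correct hypothesis is that $A$ be \emph{perfect}, in which case $p=V\circ F$ with both the Verschiebung $V$ and the Frobenius $F$ injective; this is precisely the fact the paper records just before Lemma~\ref{Witt}. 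Since $\mathbf{E}(R^{+})$ is perfect the application is valid, but the phrasing overreaches and should say ``perfect $\mathbb{F}_p$-algebras.'' The remaining hypothesis of your lifting lemma --- that $\widehat{R^{+}}$ is $p$-torsion-free --- you do supply (it follows from $R^{+}$ being a domain of mixed characteristic together with the fact that $p$-adic completion preserves $p$-torsion-freeness), so that part is fine.
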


Here is a quite important remark regarding the above proposition. Since the ring $R^{+}$ maps to its $p$-adic completion $\widehat{R^+}$, there is a natural ring homomorphism:
$$
R^{+} \to \widehat{R^+} \simeq \frac{W(\mathbf{E}(R^{+}))}{\vartheta \cdot W(\mathbf{E}(R^{+}))},
$$
which is not surjective. In fact, this fact may be regarded as a natural generalization of the construction of complete discrete valuation rings in mixed characteristic as the ring of Witt vectors of perfect fields.

\section{Statement and proof of the main theorems}

In order to prove the main theorem, we need to construct a certain perfect algebra over the Fontaine ring. Then its ring of Witt vectors will be a $p$-adically complete and separated algebra, in which the natural lift of the sequence under consideration that comes from the Fontaine ring forms a weakly almost regular sequence. The main technique using sequences of modifications to produce the desired algebra in the theorem is due to Hochster.

\begin{Discussion}
Let $(R,\fm)$ be a complete regular local ring with a regular system of parameters $p,x_2,\ldots,x_d$ such that the residue field is perfect. Then we get from Proposition~\ref{Fontaine} that $\mathbf{E}(R^{+})/\langle p \rangle \mathbf{E}(R^{+}) \simeq R^{+}/pR^{+}$, so that $\langle p \rangle, \langle x_{i} \rangle$ is a regular sequence on $\mathbf{E}(R^{+})$. We can show that every $\langle x_{i} \rangle$ is a nonzero divisor of $\mathbf{E}(R^{+})$ as follows. Suppose that we have $\langle x_{i} \rangle m=0$ for $m \in \mathbf{E}(R^{+})$. Since $\mathbf{E}(R^{+})$ is $\langle p \rangle$-adically separated, it suffices to show that $m \in \langle p \rangle^{t} \mathbf{E}(R^{+})$ for all $t>0$. Since $\langle x_{i} \rangle$ is regular on $\mathbf{E}(R^{+})/\langle p \rangle \mathbf{E}(R^{+})$, we have $m \in \langle p \rangle \mathbf{E}(R^{+})$. Suppose that $m=\langle p \rangle^{t-1}m'$ for some $m'$. Then we have $0=\langle x_{i} \rangle m=\langle x_{i} \rangle \langle p \rangle^{t-1}m'$, which implies that $\langle x_{i} \rangle m'=0$. Hence we have $m'=\langle p \rangle m''$ for some $m''$ and $m=\langle p \rangle^{t}m''$, which is the claim. 

Let $\mathbf{E}(R^{+})_{\langle x_{2} \rangle \cdots \langle x_{d} \rangle}$ be the localization of $\mathbf{E}(R^{+})$ with respect to the element $\langle x_{2} \rangle \cdots \langle x_{d} \rangle$. Then we find that $\langle p \rangle$ is a nonzero divisor of $\mathbf{E}(R^{+})_{\langle x_{2} \rangle \cdots \langle x_{d} \rangle}$, as follows from the short exact sequence in Proposition~\ref{Fontaine}. Pick any relation
$$
\langle x_{k+1} \rangle a_{k+1}=\langle p \rangle a_{1}+\sum_{i=2}^{k} \langle x_{i} \rangle a_{i}
$$ 
for $k > 0$ and $a_{i} \in \mathbf{E}(R^{+})_{\langle x_{2} \rangle \cdots \langle x_{d} \rangle}$. Then since $\langle x_{k+1}\rangle$ is a unit in $\mathbf{E}(R^{+})_{\langle x_{2} \rangle \cdots \langle x_{d} \rangle}$, the above relation forces $a_{k+1} \in (\langle p \rangle,\ldots,\langle x_{k} \rangle)$, that is, $\langle p \rangle,\ldots,\langle x_{d} \rangle$ forms an improper regular sequence on  $\mathbf{E}(R^{+})_{\langle x_{2} \rangle \cdots \langle x_{d} \rangle}$. Thus, any sequence of  algebra modifications of $\mathbf{E}(R^{+})$ can be mapped to  $\mathbf{E}(R^{+})_{\langle x_{2} \rangle \cdots \langle x_{d} \rangle}$. This fact will play an important role later.
\end{Discussion}

Henceforth, we continue to use the notation $\mathbf{E}(R^{+})_{\langle x_{2} \rangle \cdots \langle x_{d} \rangle}$. Then by (\cite{Die}, Lemma 3.5), $\mathbf{E}(R^{+})_{\langle x_{2} \rangle \cdots \langle x_{d} \rangle}$ is a perfect algebra. Before stating the main theorems, we make a remark regarding the proof of the first main theorem. In view of Remark \ref{remark}, one might wonder if one could take $\mathbf{E}(R^{+})_{\langle p \rangle \cdots \langle x_{d} \rangle}$ instead of $\mathbf{E}(R^{+})_{\langle x_{2} \rangle \cdots \langle x_{d} \rangle}$. The reason for doing so is that we need that $\theta_{\mathbf{E}(R^{+})}(\langle p \rangle)-p$ is not a unit in the Witt ring $W(\mathbf{E}(R^{+})_{\langle x_{2} \rangle \cdots \langle x_{d} \rangle})$. In fact, if $T$ is a $p$-adically complete and separated ring, any element of the form $p+u \in T$ is a unit, if $u$ is a unit. Based upon the above discussions, we are ready to prove the following theorem.

\begin{Theorem}
\label{Main1}
Under the notation as above, there exists an $\mathbf{E}(R^{+})$-algebra $S$ satisfying the following conditions:
 
\begin{enumerate}
\item[$\mathrm{(1)}$]
$(\langle p \rangle,\langle x_{2} \rangle,\ldots,\langle x_{d}\rangle)S \ne S$; 

\item[$\mathrm{(2)}$]
$\langle x_{2} \rangle,\ldots,\langle x_{d} \rangle$ forms a regular sequence on $S/\langle p \rangle S$; 

\item[$\mathrm{(3)}$]
$\langle p \rangle$ is not nilpotent and the ideal $(0:_{S} \langle p \rangle)$ is annihilated by $\langle p \rangle^{\epsilon}$ for any rational $\epsilon > 0$;

\item[$\mathrm{(4)}$]
$S$ is a perfect algebra.
\end{enumerate}
Moreover, there is an $\mathbf{E}(R^{+})$-algebra homomorphism $S \to \mathbf{E}(R^{+})_{\langle x_{2} \rangle \cdots \langle x_{d} \rangle}$.
\end{Theorem}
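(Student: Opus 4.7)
The plan is to apply Hochster's algebra-modification machinery from Section~3 to $T := \mathbf{E}(R^{+})$, viewed as an algebra over the complete regular local ring $(\mathbf{E}(R)^{\times}, \fm)$ with regular system of parameters $\langle p \rangle, \langle x_{2} \rangle, \ldots, \langle x_{d} \rangle$ (Proposition~\ref{Fontaine}(2)), and then to pass to the perfect closure. Concretely, set $T_{\infty} := \Mod_{\infty}(T/\mathbf{E}(R)^{\times})_{\geq 1}$ and $S := (T_{\infty})_{\mathrm{perf}}$.

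Property (4) is automatic by construction, and the non-zero-divisor part of (2) on $T_\infty / \langle p \rangle T_\infty$ follows from the defining property of $\Mod_{\infty}$: every type-$\geq 1$ relation on $\langle x_{k+1} \rangle$ modulo $(\langle p \rangle, \langle x_{2} \rangle, \ldots, \langle x_{k} \rangle)$ is trivialised, and a standard argument (using that powers of a regular sequence remain a regular sequence in characteristic $p$) propagates this to $S / \langle p \rangle S$. To ensure that $(\langle p \rangle, \langle x_{2} \rangle, \ldots, \langle x_{d} \rangle) S$ is a \emph{proper} ideal---which by the paper's convention is part of (2) and is equivalent to (1)---I would apply Proposition~\ref{type} with $k = 1$ via the witness $B := (R/pR)^{+}$. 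Since $R/pR$ is a complete (hence excellent) regular local domain in characteristic $p$ with system of parameters $x_{2}, \ldots, x_{d}$, Theorem~\ref{HHHL} makes $B$ a big Cohen--Macaulay $R/pR$-algebra with $\fm B \neq B$ (note that $\langle p \rangle$ annihilates $B$). A map $\mathbf{E}(R^{+}) \to B$ of $\mathbf{E}(R)^{\times}$-algebras is built by factoring through $\mathbf{E}(R^{+}) / \langle p \rangle \mathbf{E}(R^{+}) \simeq R^{+}/pR^{+}$, reducing modulo the nilradical, and embedding $(R^{+}/pR^{+})_{\mathrm{red}}/\fq$ into $B$ for any minimal prime $\fq$, which necessarily contracts to $(0) \subset R/pR$ by lying over. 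Every finite type-$\geq 1$ modification $T_{s}$ of $\mathbf{E}(R^{+})$ then maps to $B$, forcing $1 \notin \fm T_{s}$, and hence $\fm S \neq S$.

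Property (3) follows from perfection: if $\langle p \rangle s = 0$ in $S$, then for any positive integer $b$, $(\langle p \rangle^{1/b} s)^{b} = \langle p \rangle s^{b} = s^{b-1}(\langle p \rangle s) = 0$, and reducedness of the perfect ring $S$ forces $\langle p \rangle^{1/b} s = 0$. Here $\langle p \rangle^{1/b} = \langle p^{1/b} \rangle \in \mathbf{E}(R^{+}) \subseteq S$ because $R^{+}$ contains every rational root of $p$; multiplying by integer powers then gives $\langle p \rangle^{\epsilon} s = 0$ for every positive rational $\epsilon$. Non-nilpotence of $\langle p \rangle$ and the moreover clause both come from the $\mathbf{E}(R^{+})$-algebra map $S \to L := \mathbf{E}(R^{+})_{\langle x_{2} \rangle \cdots \langle x_{d} \rangle}$: the Discussion supplies $T_{\infty} \to L$, which extends to the perfect closure because $L$ is perfect by \cite{Die}, Lemma~3.5, and $\langle p \rangle$ remains a nonzerodivisor in $L$. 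The principal obstacle is the argument for (1): the natural target $L$ is useless here because $\langle x_{2} \rangle$ is a unit in $L$ and $\fm L = L$. The rescue is to use $B = (R/pR)^{+}$ instead---Proposition~\ref{type} with $k = 1$ only requires control modulo $\langle p \rangle$, and there the characteristic-$p$ BCM algebra supplied by Theorem~\ref{HHHL} provides exactly the needed regularity of $x_{2}, \ldots, x_{d}$.
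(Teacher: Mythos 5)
Your construction is the same as the paper's: take $T_{\infty}$ to be the universal type-$\ge 1$ modification of $\mathbf{E}(R^{+})$ over the regular local ring $\mathbf{E}(R)^{\times}$ and set $S$ equal to its perfect closure, then use Theorem~\ref{HHHL} in characteristic $p$ to rule out bad modifications and the localization $\mathbf{E}(R^{+})_{\langle x_{2} \rangle \cdots \langle x_{d} \rangle}$ to control $\langle p \rangle$. Your witness $(R/pR)^{+}$ in Proposition~\ref{type} is the same ring as the paper's $R^{+}/Q$ (via $R^{+}/Q \simeq (R/R\cap Q)^{+}$ with $R\cap Q=pR$); the only genuine difference is the verification of the almost-annihilator part of (3). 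The paper argues by iterating the inverse Frobenius on $N := (0:_{S}\langle p\rangle)$, producing a chain $z_{1}^{p^{-k}}N \hookrightarrow \cdots \hookrightarrow z_{1}N = 0$ and invoking injectivity of $\mathbf{F}_{S}$. You instead observe that if $\langle p\rangle s = 0$ then $(\langle p\rangle^{1/b}s)^{b}=0$ and reducedness of the perfect ring $S$ kills $\langle p\rangle^{1/b}s$ directly; this is a cleaner, more elementary route to the same conclusion and does not need to track Frobenius preimages of $N$. Both arguments rely on the same underlying input, namely that $\mathbf{E}(R^{+})$ already contains all rational $p$-power roots $\langle p\rangle^{\epsilon}$, which the paper flags at the start of its proof. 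The remaining small imprecisions in your writeup — calling the map $(R^{+}/pR^{+})_{\mathrm{red}}/\fq \to (R/pR)^{+}$ an ``embedding'' when it is in fact an isomorphism, and the brief ``standard argument'' for propagating the regular sequence from $T_{\infty}/\langle p\rangle T_{\infty}$ to $S/\langle p\rangle S$ — are at the same level of detail as the paper's own treatment (which asserts without elaboration that $z_{2}^{p^{e}},\ldots,z_{d}^{p^{e}}$ is regular on $C/z_{1}^{p^{e}}C$), so they do not represent a divergence from the source.
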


\begin{proof}
Before we start the proof, we note that $\langle p \rangle^{\epsilon} \in \mathbf{E}(R^{+})$ for any rational $\epsilon > 0$. We will construct the $\mathbf{E}(R^{+})$-algebra $S$ by taking sequences of modifications of $\mathbf{E}(R^{+})$, using the relations of type $\ge 1$ with respect to $z_{1}:=\langle p \rangle, z_{2}:=\langle x_{2} \rangle,\ldots,z_{d}:=\langle x_{d} \rangle$. We prove the theorem by contradiction. Suppose that, as in Proposition \ref{type}, there is a sequence of modifications of type $\ge 1$ with respect to $z_{1},\ldots,z_{d}$:
$$
\begin{CD}
\mathcal{T}: \mathbf{E}(R^{+})=T_{0} @>>> T_{1} @>>> \cdots @>>> T_{s} \\
\end{CD}
$$
such that $1 \in (z_{1},\ldots,z_{d})T_{s}$. Note that $\mathbf{E}(R^{+})/z_{1}\mathbf{E}(R^{+}) \simeq R^{+}/pR^{+}$ is an algebra over a complete regular local ring $\mathbf{E}(R)^{\times}/z_{1} \mathbf{E}(R)^{\times} \simeq R/pR$ on which $z_{2},\ldots,z_{d}$ descends to a system of parameters. We keep the same notation for a system of parameters of $R/pR$.

After dividing the sequence $\mathcal{T}$ out by $z_{1}$, we show that the induced sequence maps to a sequence of modifications of $R^{+}/pR^{+}$. Now let us look at things more closely. Let
$$
T_{i+1}=\frac{T_{i}[X_{1}^{(i)},\ldots,X_{k}^{(i)}]}{(s_{k+1}^{(i)}-\sum_{j=1}^{k}z_{j}X_{j}^{(i)})},~s_{k+1}^{(i)} \in T_{i}.
$$
Then we have 
$$
T_{i+1} \equiv \frac{T_{i}[X_{1}^{(i)},\ldots,X_{k}^{(i)}]}{(s_{k+1}^{(i)}-\sum_{j=2}^{k}z_{j}X_{j}^{(i)})} \mod z_{1}.
$$
We set
$$
\overline{T}_{i+1}:=\frac{T_{i+1}}{(z_{1},X_{1}^{(0)},\ldots,X_{1}^{(i)})T_{i+1}}.
$$
Now it follows that a new sequence:
$$
\begin{CD}
\overline{\mathcal{T}}:R^{+}/pR^{+}=\overline{T}_{0} @>>> \overline{T}_{1} @>>> \cdots @>>> \overline{T}_{s} \\
\end{CD}
$$ 
is a sequence of modifications of $R^{+}/pR^{+}$ with respect to $z_{2},\ldots,z_{d}$ satisfying $1 \in (z_{2},\ldots,z_{d})\overline{T}_{s}$. Furthermore, if $Q$ is a minimal prime ideal of $R^{+}$ over $pR^{+}$, then $R^{+}/Q$ is the absolute integral closure of $R/pR$. We then replace $\overline{\mathcal{T}}$ with $\overline{\mathcal{T}} \otimes (R^{+}/Q)$ and get a sequence of bad modifications of $R^{+}/Q$ over $R/pR$. However, since $R^{+}/Q$ is a big Cohen-Macaulay algebra over $R/pR$ by Theorem~\ref{HHHL}, we apply a standard technique (see~\cite{HH2}, Proposition 3.3, or~\cite{Ho94}, Proof of Theorem 11.1) to construct the following commutative diagram:
$$
\begin{CD}
R^{+}/Q @= R^{+}/Q @= \cdots @= R^{+}/Q \\
@AAA @AAA @. @AAA \\
R^{+}/Q @>>> \overline{T}_{1} @>>> \cdots @>>> \overline{T}_{s} \\
\end{CD}
$$
in which the first vertical arrow is the identity map, and so a contradiction: $(z_{2},\ldots,z_{d})R^{+}/Q=R^{+}/Q$. Hence we have proved $(1)$ and $(2)$.

To show that $z_{1}$ is not nilpotent in $T$, we may construct a similar commutative diagram as above. By the preceding discussion, $z_{1},\ldots,z_{d}$ forms an improper regular sequence on the localized algebra $\mathbf{E}(R^{+})_{z_{2} \ldots z_{d}}$. Suppose that there is a sequence of modifications of type $\ge 1$:
$$
\begin{CD}
\mathcal{T}: \mathbf{E}(R^{+})=T_{0} @>>> T_{1} @>>> \cdots @>>> T_{s} \\
\end{CD}
$$
such that $z_{1}$ is nilpotent in $T_{s}$. Then we may construct a commutative diagram:
$$
\begin{CD}
\mathbf{E}(R^{+})_{z_{2} \cdots z_{d}} @= \mathbf{E}(R^{+})_{z_{2} \cdots z_{d}} @= \ldots @= \mathbf{E}(R^{+})_{z_{2} \cdots z_{d}} \\
@AAA @AAA @. @AAA \\
\mathbf{E}(R^{+}) @>>> T_{1} @>>> \cdots @>>> T_{s} \\
\end{CD}
$$ 
However, this diagram clearly contradicts that $z_{1}$ is nilpotent in $T_{s}$.  Denote by $C$ the direct limit of various sequences of algebra modifications of type $\ge 1$ over $\mathbf{E}(R^{+})$ with respect to $z_1,z_2,\ldots,z_d$. Then $z_{1}$ is not nilpotent in $C$ as well. Hence this proves $(3)$. From the above construction, there is a natural $\mathbf{E}(R^{+})$-algebra homomorphism $C \to \mathbf{E}(R^{+})_{z_{2}\cdots z_{d}}$.

To finish the proof, let us construct the desired algebra $S$ out of $C$. Since a perfect ring remains perfect under localization (\cite{Die}, Lemma 3.5), we take the perfect closure of the map $C \to \mathbf{E}(R^{+})_{z_{2}\cdots z_{d}}$ and obtain a commutative diagram of $\mathbf{E}(R^{+})$-algebra homomorphisms: 
$$
\begin{CD}
C @>>> \mathbf{E}(R^{+})_{z_{2}\cdots z_{d}} \\
@VVV @VVV \\
S @>>> \mathbf{E}(R^{+})_{z_{2}\cdots z_{d}} \\
\end{CD}
$$
Since $z_{1}$ is a nonzero divisor of $\mathbf{E}(R^{+})_{z_{2}\cdots z_{d}}$, $z_{1}$ is not nilpotent in $S$ as well, and $z_{2},\ldots,z_{d}$ forms a regular sequence on $S/z_{1}S$. Indeed for the latter assertion, let $J_{k}:=(z_{1},\ldots,z_{k})$ for $1 \le k \le d$. Then $z_{2}^{p^e},\ldots,z_{d}^{p^e}$ is a regular sequence on $C/z_{1}^{p^e}C$ and for $k \ge 1$, $S/J_{k}S$ is identified with the direct limit defined by the Frobenius map:
$$
\begin{CD}
C/J_{k}C @>\mathbf{F}>> C/J_{k}^{[p]}C @>\mathbf{F}>> C/J_{k}^{[p^2]}C @>\mathbf{F}>> \cdots. 
\end{CD}
$$ 
So we get our assertion. It remains to show that $z_{1}^{\epsilon} \cdot (0:_{S} z_{1})=0$ for any rational $\epsilon > 0$. Let $N:=(0:_{S} z_{1})$. Since the Frobenius map $\mathbf{F}_{S}:S \to S$ is bijective and there is an inclusion $N \subseteq \mathbf{F}^{-k}_{S}(N)$, we may iterate the Frobenius and get an injective sequence of ideals in $S$ ($\mathbf{F}^{0}_{S}$ is the identity map):
$$
\begin{CD}
z_{1}^{p^{-k}} \cdot N @>\hookrightarrow>>  z_{1}^{p^{-k}} \cdot \mathbf{F}^{-k}_{S}(N)  @>\hookrightarrow>> \cdots @>\hookrightarrow>> z_{1} \cdot N=0,
\end{CD}
$$
and as this holds for arbitrarily large $k > 0$, we complete the proof of the theorem.
\end{proof}

We are now ready to prove the main theorem.

\begin{Theorem}
\label{AlmostCM}
Let $(R,\fm)$ be a complete local domain of mixed characteristic $p > 0$. Then there exist a system of parameters $p,x_{2},\ldots,x_{d}$ of $R$ and a weakly almost Cohen-Macaulay $R^+$-algebra $B$ satisfying the following conditions:

\begin{enumerate}
\item[$\mathrm{(1)}$]
$(p,x_{2},\ldots,x_{d})B \ne B$;

\item[$\mathrm{(2)}$]
$x_{2},\ldots,x_{d}$ forms a regular sequence on $B/pB$; 

\item[$\mathrm{(3)}$]
$p$ is not nilpotent in $B$ and the ideal $(0:_{B} p)$ is annihilated by $p^{\epsilon}$ for any rational $\epsilon > 0$.

\end{enumerate}
\end{Theorem}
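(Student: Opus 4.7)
The plan is first to reduce to the case where $R$ is a complete regular local ring with perfect residue field and a regular system of parameters $p, x_{2}, \ldots, x_{d}$: by Cohen's structure theorem (after a residue field extension if necessary), every complete local domain of mixed characteristic $p$ is module-finite over such an $A = V[[x_{2}, \ldots, x_{d}]]$, and since $A^{+} = R^{+}$, any weakly almost Cohen-Macaulay $A^{+}$-algebra serves the purpose for $R$. Invoking Theorem~\ref{Main1} produces a perfect $\mathbf{E}(R^{+})$-algebra $S$ satisfying properties (1)--(4) with respect to $\langle p \rangle, \langle x_{2} \rangle, \ldots, \langle x_{d} \rangle$. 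The Witt ring $W(S)$ is then $p$-adically complete, $p$-torsion free, and satisfies $W(S)/pW(S) \simeq S$, so I set
$$
B := W(S)/\vartheta W(S), \qquad \vartheta := \theta_{\mathbf{E}(R^{+})}(\langle p \rangle) - p.
$$
Proposition~\ref{Teichmuller} identifies $W(\mathbf{E}(R^{+}))/\vartheta$ with $\widehat{R^{+}}$, so the natural map $W(\mathbf{E}(R^{+})) \to W(S)$ descends to $\widehat{R^{+}} \to B$, which together with $R^{+} \to \widehat{R^{+}}$ makes $B$ an $R^{+}$-algebra; by Lemma~\ref{Witt}, the images of $p$ and $x_{i}$ in $B$ coincide with the classes of $\theta_{\mathbf{E}(R^{+})}(\langle p \rangle)$ and $\theta_{\mathbf{E}(R^{+})}(\langle x_{i} \rangle)$ modulo $\vartheta$.

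\textbf{Verification of (1), (2), and the almost-annihilator clause of (3).} Reducing $B$ modulo $p$ replaces $\vartheta$ with $\theta_{\mathbf{E}(R^{+})}(\langle p \rangle)$, giving a canonical isomorphism $B/pB \simeq S/\langle p \rangle S$ under which $x_{i} \mapsto \langle x_{i} \rangle$. Property (2) is then Theorem~\ref{Main1}(2), while quotienting further by $x_{2}, \ldots, x_{d}$ identifies $B/(p, x_{2}, \ldots, x_{d})B$ with $S/(\langle p \rangle, \langle x_{2} \rangle, \ldots, \langle x_{d} \rangle) S$, which is nonzero by Theorem~\ref{Main1}(1), giving (1). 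For the almost-annihilator part of (3), if $x \in B$ satisfies $px = 0$, lift $x$ to $\tilde x \in W(S)$ and write $p \tilde x = \vartheta u$. Reducing modulo $p$ yields $\langle p \rangle \bar u = 0$ in $S$, so Theorem~\ref{Main1}(3) forces $\langle p \rangle^{\epsilon} \bar u = 0$ for every rational $\epsilon > 0$; lifting via Teichm\"uller produces $v \in W(S)$ with $\theta_{\mathbf{E}(R^{+})}(\langle p \rangle^{\epsilon}) u = pv$. Multiplying $p \tilde x = \vartheta u$ by $\theta_{\mathbf{E}(R^{+})}(\langle p \rangle^{\epsilon})$ and canceling $p$, which is a nonzero divisor on $W(S)$ since $S$ is perfect, yields $\theta_{\mathbf{E}(R^{+})}(\langle p \rangle^{\epsilon}) \tilde x = \vartheta v$, i.e., $p^{\epsilon} x = 0$ in $B$.

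\textbf{Non-nilpotency of $p$ (the main obstacle).} The crux of the proof is showing that $p$ itself is not nilpotent in $B$; the difficulty is that $\vartheta$ is only an almost nonzero divisor on $W(S)$ because $\langle p \rangle$ has a nonzero annihilator in $S$. To circumvent this, I exploit the map $S \to \mathbf{E}(R^{+})_{\langle x_{2} \rangle \cdots \langle x_{d} \rangle}$ from Theorem~\ref{Main1}. Applying the Witt functor and reducing modulo $\vartheta$ furnishes a ring homomorphism
$$
B \longrightarrow W(\mathbf{E}(R^{+})_{\langle x_{2} \rangle \cdots \langle x_{d} \rangle})/\vartheta\, W(\mathbf{E}(R^{+})_{\langle x_{2} \rangle \cdots \langle x_{d} \rangle})
$$
sending $p$ to $p$. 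In the target, the reduction $\vartheta \bmod p$ equals $\langle p \rangle$, which is a nonzero divisor on $\mathbf{E}(R^{+})_{\langle x_{2} \rangle \cdots \langle x_{d} \rangle}$ because it is one on $\mathbf{E}(R^{+})$ by Proposition~\ref{Fontaine}(1) and nonzero divisors are preserved under localization. Combined with the fact that $p$ is a nonzero divisor on the Witt vectors of any perfect $\mathbb{F}_{p}$-algebra, a routine $p$-adic lifting (reduce $py = \vartheta u$ modulo $p$, apply the nonzero divisor property to conclude $u \in pW$, cancel $p$, and descend) yields that $p$ is a nonzero divisor on $W(\mathbf{E}(R^{+})_{\langle x_{2} \rangle \cdots \langle x_{d} \rangle})/\vartheta$. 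The further reduction modulo $p$ of this quotient is $\mathbf{E}(R^{+})_{\langle x_{2} \rangle \cdots \langle x_{d} \rangle}/\langle p \rangle \simeq (R^{+}/pR^{+})_{x_{2} \cdots x_{d}}$, which is nonzero because $x_{2} \cdots x_{d}$ is nonnilpotent in $(R/pR)^{+}$ by Theorem~\ref{HHHL}, and $(R/pR)^{+}$ is a quotient of $R^{+}/pR^{+}$. Hence $p$ is non-nilpotent in the target, forcing $p$ non-nilpotent in $B$ and completing the verification of (3).
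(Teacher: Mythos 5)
Your proof is correct and follows essentially the same route as the paper: Cohen's structure theorem to reduce to the unramified regular case with perfect residue field, then Theorem~\ref{Main1} to produce the perfect $\mathbf{E}(R^{+})$-algebra $S$, passage to $B = W(S)/\vartheta W(S)$, and use of the auxiliary map to $W(\mathbf{E}(R^{+})_{\langle x_{2}\rangle\cdots\langle x_{d}\rangle})/\vartheta$ (where $\vartheta, p$ is a regular sequence) to handle non-nilpotency of $p$. Your almost-annihilator computation is a minor algebraic rearrangement of the paper's ($p\tilde x=\vartheta u$ in place of $p(x+y)=\langle p\rangle y$) but rests on the same identity and the same cancellation of the nonzero divisor $p$ in $W(S)$.
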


\begin{proof}
By Cohen's structure theorem, there exists an unramified complete regular local ring $A$ for which $A \to R$ is module-finite. By enlarging the residue field of each ring of $A \to R$ to its perfect closure, completing, and killing it by some minimal prime, we get a map $A' \to R'$ of complete local domains with perfect residue fields. Then it follows from (\cite{Mat}, Theorem 8.4) that $A' \to R'$ is module-finite. In other words, we are in the hypotheses of Theorem~\ref{Main1}, so that we may choose $p,x_{2},\ldots,x_{d}$ as a regular system of parameters of $A'$ by keeping track of the image of the regular system of parameters of $A$. We fix the notation as in Theorem~\ref{Main1}.

Recall that the $\mathbf{E}(R^{+})$-algebra homomorphism $S \to \mathbf{E}(R^{+})_{z_{2} \cdots z_{d}}$ has been constructed in the previous theorem. Taking their Witt rings, we have an $W(\mathbf{E}(R^{+}))$-algebra homomorphism:
$$
\begin{CD}
W(S) @>>> W(\mathbf{E}(R^{+})_{z_{2} \cdots z_{d}}), 
\end{CD}
$$
in which $p$ is a nonzero divisor, because both $S$ and $\mathbf{E}(R^{+})_{z_{2}\cdots z_{d}}$ are perfect algebras. But then Proposition \ref{Teichmuller} together with its following remark provides us a sequence of $R^{+}$-algebra homomorphisms:
$$
R^+ \to \widehat{R^+} \simeq \frac{W(\mathbf{E}(R^{+}))}{\vartheta \cdot W(\mathbf{E}(R^{+}))} \to B:=\frac{W(S)}{\vartheta \cdot W(S)} \to C:=\frac{W(\mathbf{E}(R^{+})_{z_{2} \cdots z_{d}})}{\vartheta \cdot W(\mathbf{E}(R^{+})_{z_{2} \cdots z_{d}})}
$$
for $\vartheta=\theta_{\mathbf{E}(R^{+})}(\langle p \rangle)-p$. To simplify notation, let us write $\langle x \rangle$ for $\theta_{\mathbf{E}(R^{+})}(\langle x \rangle)$. Then we have $\psi(\langle x_{i} \rangle)=x_{i}$ by Lemma~\ref{Witt}. Since $W(\mathbf{E}(R^{+})_{z_{2} \cdots z_{d}})$ is $p$-adically separated, one can easily verify that $\vartheta,p$ forms a regular sequence on $W(\mathbf{E}(R^{+})_{z_{2} \cdots z_{d}})$, and thus $p$ is a nonzero divisor of $C$ and $p$ is not nilpotent in $B$. It is obvious that $x_{2},\ldots,x_{d}$ is a regular sequence on $B/pB$. 

It remains to show that the ideal $(0:_{B} p)$ is annihilated by $p^{\epsilon}$ for any rational $\epsilon > 0$. Now assume $p x=(\langle p \rangle-p)y$ for $x,y \in W(S)$. Then $p(x+y)=\langle p \rangle y$. Since $S \simeq W(S)/p \cdot W(S)$, we have $\langle p \rangle^{\epsilon} y=py'$ for some $y' \in W(S)$ by Theorem~\ref{Main1} and thus,
$$
\langle p \rangle^{\epsilon}p(x+y)=\langle p \rangle^{\epsilon}\langle p \rangle y.
$$
Then this yields $\langle p \rangle^{\epsilon}px+p^{2}y'=\langle p \rangle p y'$, or $\langle p \rangle^{\epsilon}x+py'=\langle p \rangle y'$. Hence $\langle p \rangle^{\epsilon}x=(\langle p \rangle-p)y'$. On the other hand, $p$ and $\langle p \rangle$ become identical after applying the map $\psi:W(\mathbf{E}(R^{+})) \to \widehat{R^{+}}$ from Proposition~\ref{Teichmuller}. Thus, we conclude that $p^{\epsilon} x=0$ in $B$, which is our assertion. The desired almost Cohen-Macaulay $R^{+}$-algebra $B$ has successfully been constructed.
\end{proof}

\section{Concluding remarks}

In this final section, we show under some additional assumptions, that the existence of weakly almost Cohen-Macaulay algebras constructed previously yields the Monomial Conjecture in mixed characteristic. The Monomial Conjecture states that $x_1^t \cdots x_d^t \notin (x_1^{t+1},\ldots,x_d^{t+1})$ for all $t \in \mathbb{N}$ and all systems of parameters $x_1,\ldots,x_d$ of any local Noetherian ring. For the proof of the following corollary, we use Hochster's partial algebra modifications. Especially, we note that the nonzero divisor $c \in T$ which appears in (\cite{Ho02}, Lemma 5.1) can be replaced with a non-nilpotent element.

\begin{Corollary}
Let the $R^{+}$-algebra $B$ be the same as in Theorem~\ref{AlmostCM} and assume that 
$$
p^{\epsilon} \notin (p,x_{2},\ldots,x_{d})B
$$ 
for some rational $\epsilon > 0$. Then $B$ maps to a big Cohen-Macaulay $R^+$-algebra as an $R^+$-algebra. In particular, the Monomial Conjecture holds for any ring $T$ with $R \subseteq T \subseteq R^{+}$.
\end{Corollary}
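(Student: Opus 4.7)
The goal is to produce a big Cohen--Macaulay $R^{+}$-algebra $W$ equipped with an $R^{+}$-algebra homomorphism $B\to W$. Once such $W$ exists, the Monomial Conjecture for any Noetherian ring $T$ with $R\subseteq T\subseteq R^{+}$ is standard: such $T$ is module-finite over $R$, the composition $T\to R^{+}\to W$ makes $W$ into a big Cohen--Macaulay $T$-algebra, and the usual regular-sequence argument then rules out any relation of the form $x_{1}^{t}\cdots x_{d}^{t}\in(x_{1}^{t+1},\ldots,x_{d}^{t+1})$ on a system of parameters.

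To build $W$, I would apply Proposition~\ref{type} with $k=0$ to $B$ and the system of parameters $p,x_{2},\ldots,x_{d}$; that is, I must show that no finite sequence of algebra modifications
\[
B=T_{0}\to T_{1}\to\cdots\to T_{s}
\]
ends with $1\in(p,x_{2},\ldots,x_{d})T_{s}$. The tool is Hochster's partial algebra modification method from \cite[Lemma 5.1]{Ho02}; as noted at the start of this section, it remains effective when its distinguished nonzero divisor is weakened to a non-nilpotent element, and here $p$ plays that role by virtue of Theorem~\ref{AlmostCM}(3).

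Suppose, for contradiction, such a bad sequence exists. I would pull the presentation $1=\sum z_{i}a_{i}$ (writing $z_{1}=p$, $z_{i}=x_{i}$ for $i\ge 2$) backwards one modification at a time. If the eliminated modification is of type $\ge 1$, its defining relation encodes a colon element of $(p,x_{2},\ldots,x_{i})B:_{B}x_{i+1}$; the weakly almost regular property supplies an $R^{+}$-annihilator of arbitrarily small valuation, and since $p^{\delta}$ has arbitrarily small valuation for every rational $\delta>0$, the pullback at that step may be carried out at the cost of a factor $p^{\delta_{j}}$ with $\delta_{j}$ freely prescribed. If the modification is of type $0$, it kills a $p$-torsion element of $B$, and Theorem~\ref{AlmostCM}(3) furnishes the same clearance. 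After all $s$ pullbacks one would arrive at
\[
p^{\delta_{1}+\cdots+\delta_{s}}\in(p,x_{2},\ldots,x_{d})B,
\]
and choosing the $\delta_{j}$'s so that $\delta_{1}+\cdots+\delta_{s}<\epsilon$ would give $p^{\epsilon}\in(p,x_{2},\ldots,x_{d})B$, contradicting the hypothesis.

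The main obstacle will be the careful bookkeeping in the partial-modification pullback: verifying that each elementary step, whether of type $0$ or of type $\ge 1$, introduces only a multiplicative correcting factor that can be prescribed to be $p^{\delta_{j}}$ with $\delta_{j}$ an arbitrarily small positive rational. This is precisely the adaptation of \cite[Lemma 5.1]{Ho02} to a non-nilpotent witness alluded to at the opening of this section, and it is where the full strength of the weakly almost regular property of $B$ --- the ability to choose annihilators of arbitrarily small valuation --- is genuinely needed.
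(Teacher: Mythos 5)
Your proposal is correct and takes essentially the same approach as the paper: both invoke Hochster's partial algebra modification machinery from \cite{Ho02} (with the distinguished element weakened from nonzero divisor to non-nilpotent, as noted at the start of the section), map a putative bad modification sequence into $B[p^{-1}]$ while tracking images in cyclic submodules $p^{-\epsilon N_i}B$, and conclude $p^{\epsilon}\in(p,x_{2},\ldots,x_{d})B$ for every rational $\epsilon>0$, contradicting the hypothesis. The paper phrases this via the commutative diagram of \cite[Theorem 5.2]{Ho02} and a single rescaling $\epsilon\mapsto\epsilon N^{-1}$, while you unpack the same bound step by step with clearances $p^{\delta_{j}}$; the two are the same argument.
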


\begin{proof}
It is a well-known fact that the Monomial Conjecture holds for any such subring $T$, if there exists a big Cohen-Macaulay algebra over $R^{+}$ (see \cite{Ho75} for example). We need to consider the sequences of modifications over the $R^{+}$-algebra $B$. If we end up with a bad sequence of modifications, there is a commutative diagram:
$$
\begin{CD} 
B[p^{-1}] @= B[p^{-1}] @= \cdots @= B[p^{-1}] \\
@AAA @AAA @. @AAA \\
B @>>> T_{1} @>>> \cdots @>>> T_{s} \\
\end{CD} 
$$
in which we have, as stated in (\cite{Ho02}, Theorem 5.2), that the leftmost vertical arrow is the natural map, and the image of each $T_{i}$ is contained in the cyclic module $p^{-\epsilon N_{i}} B$ for $0 \le i \le s$, arbitrarily small $\epsilon>0$ and some integer $N_{i} > 0$. Now we have $1 \in (p,x_{2},\ldots,x_{d}) p^{-\epsilon N}B$ for a fixed integer $N > 0$. Then this is just $p^{\epsilon N} \in (p,x_{2},\ldots,x_{d})B$. If we replace $\epsilon$ with $\epsilon N^{-1}$, we have $p^{\epsilon} \in (p,x_{2},\ldots,x_{d})B$, which is a contradiction to the hypothesis of the corollary. Hence $B$ maps to a big Cohen-Macaulay $R^+$-algebra.
\end{proof}

We discuss how strong our extra condition that $p^{\epsilon} \notin (p,x_{2},\ldots,x_{d})B$ would be. Assume that $B$ is already a big Cohen-Macaulay algebra over $(R,\fm)$. Then it is known that the $\fm$-adic completion of $B$ is ``balanced'' in the sense that every system of parameters for $R$ becomes a regular sequence on it. In this sense, no matter how huge the algebra $B$ may be, the completed algeba $\widehat{B}$ satisfies even the stronger condition that $0=\bigcap_{n>0} (p,x_2\ldots,x_d)^n\widehat{B}$. So our condition is at least necessary, if we require that $B$ maps to a big Cohen-Macaulay algebra. The above corollary also explains why Roberts assumes that $B/\fm B$ is not almost zero. In fact, his condition states that $\fm B$ does not contain elements of $R^+$ with arbitrarily small valuations.

For non-finitely generated modules over Noetherian rings, the separatedness condition is quite subtle. Here is a simple example.

\begin{Example}
Let $V$ be a complete discrete valuation ring, let $K$ be its field of fractions, and let $N:=V \oplus K$ as a $V$-module. If $t$ is any nonzero element in the maximal ideal of $V$, then it is a nonzero divisor of $N$, but we have $t^n N=t^n V \oplus K$, because $t^n K=K$. It follows that the $t$-adic completion of $N$ is just $V$ and thus, $N$ is far from being separated.
\end{Example}

To end this section, we suggest a possible approach to the separatedness issue. Recall that Roberts' condition is that $B/\fm B$ is not almost zero as an $R^+$-module. If one looks at $B$ as a module over itself, the trouble with $B$ is that it may not be a domain, so that one cannot define a valuation on it directly. However, by finding a ``pseudo'' valuation on $B$, one can obtain the same conclusion as in the above corollary. To make it clear, we use the convention that $\infty=0 \cdot \infty$. The argument in the next corollary is found by Asgharzadeh \cite{ASh}.

\begin{Corollary}
Let the $R^{+}$-algebra $B$ be the same as in Theorem~\ref{AlmostCM} and assume that there is a function $v:B \to \mathbb{R} \cup \{\infty\}$ satisfying the following conditions:

\begin{enumerate}
\item[$\mathrm{(1)}$]
\mbox{$v(ab)=v(a)+v(b)$ for all $a, b \in B$};

\item[$\mathrm{(2)}$]
\mbox{$v(a+b) \ge \min\{v(a), v(b)\}$ for all $a, b \in B$};

\item[$\mathrm{(3)}$]
$v(0)=\infty$;

\item[$\mathrm{(4)}$]
\mbox{$v$ is non-negative on $B$ and $v(b)>0$ for any nonunit element $b \in B$}.
\end{enumerate}
Then $B$ maps to a big Cohen-Macaulay $R^+$-algebra as an $R^+$-algebra.
\end{Corollary}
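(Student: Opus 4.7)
The plan is to use the pseudo-valuation $v$ to verify the hypothesis $p^{\epsilon_0} \notin (p, x_2, \ldots, x_d)B$ of the previous corollary for some rational $\epsilon_0 > 0$; once this is established, that corollary supplies the big Cohen-Macaulay $R^+$-algebra to which $B$ maps. The underlying idea is that $v(p^\epsilon) = \epsilon v(p)$ decays to zero with $\epsilon$, while any element of $(p, x_2, \ldots, x_d)B$ must have $v$-value bounded below by a fixed positive quantity.

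By condition (1) of Theorem~\ref{AlmostCM}, the elements $p, x_2, \ldots, x_d$ are nonunits in $B$, so condition (4) yields $v(p), v(x_2), \ldots, v(x_d) > 0$. Under the natural interpretation that $v$ takes finite values on nonzero elements (in particular on $p$, which is non-nilpotent in $B$), set $\alpha := \min\{v(p), v(x_2), \ldots, v(x_d)\} > 0$. For any positive rational $\epsilon = m/n$, the element $p^\epsilon \in R^+ \subseteq B$ satisfies $(p^\epsilon)^n = p^m$, and (1) gives $v(p^\epsilon) = \epsilon v(p)$. Note that this value is independent of the choice of $n$-th root, since any two differ by a root of unity, hence by a unit, and (1), (4), and non-negativity together force $v$ to vanish on units.

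Now suppose toward contradiction that $p^\epsilon \in (p, x_2, \ldots, x_d)B$ for every rational $\epsilon > 0$; write $p^\epsilon = p y_1 + x_2 y_2 + \cdots + x_d y_d$ with $y_j \in B$. Applying (2) to the sum, (1) to each product, and non-negativity of $v$ on $B$, one obtains $v(p^\epsilon) \ge \min_i\{v(p y_1), v(x_i y_i)\} \ge \min_i\{v(p), v(x_i)\} = \alpha$. Combining with the identity $v(p^\epsilon) = \epsilon v(p)$ yields $\epsilon v(p) \ge \alpha$ for every rational $\epsilon > 0$, which fails as soon as $\epsilon < \alpha / v(p)$. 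Hence there exists a rational $\epsilon_0 > 0$ with $p^{\epsilon_0} \notin (p, x_2, \ldots, x_d)B$, and the previous corollary completes the proof.

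The main obstacle is the finiteness of $v(p)$: if $v(p) = \infty$ were permitted, then $v(p^\epsilon) = \infty$ for every $\epsilon$ and the inequality $\epsilon v(p) \ge \alpha$ becomes vacuous. This finiteness is implicit in any pseudo-valuation that genuinely detects the $p$-adic scale, so the reduction to the previous corollary is clean under the intended reading of the axioms; if one wishes to dispense with it, the alternative is to run $v$ directly through Hochster's partial algebra modifications, tracking how valuations propagate in the bad sequence $B \to T_1 \to \cdots \to T_s$ and extracting the contradiction inside $B$ itself, but this requires more delicate bookkeeping.
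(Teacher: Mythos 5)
Your argument is correct and is essentially the paper's proof: both reduce to the preceding corollary by exhibiting a positive lower bound for $v$ on the ideal $(p,x_{2},\ldots,x_{d})B$ (the paper packages this as the statement that $B/\fm B$ is not almost zero, bounding $v$ below on $\fm B$ by $\min_i v(z_i)$ for generators $z_i$ of $\fm$, whereas you bound the parameter ideal directly by $\alpha=\min\{v(p),v(x_2),\ldots,v(x_d)\}$), while $v(p^{\epsilon})=\epsilon\,v(p)$ can be driven below that bound. You are also right to flag that the reduction tacitly requires $v(p)<\infty$; the paper shares this unstated assumption, and conditions (1)--(4) alone do not rule out the degenerate case $v(p)=\infty$.
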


In the proof, we use almost zero modules with respect to $v:B \to \mathbb{R} \cup \{\infty\}$. This is found in \cite{ASh}. But we only need to keep in mind that the definition of almost zero modules over an algebra (which is not necessarily a domain) is in the same format as given in \cite{Ro2}.

\begin{proof}
First, we prove that $B/\fm B$ is not almost zero with respect to the function $v$. For a contradiction, suppose that $B/\fm B$ is almost zero. In particular, $1 \in B/\fm B$ is almost zero. Then for any $\epsilon>0$, we can find $b \in B$ such that $v(b)<\epsilon$ and $b \in \fm B$, which says that $\fm B$ has elements with arbitrarily small valuations. For any $a \in \fm B$, writing $a=\sum_{i=1}^{n} a_iz_i$ with $\fm=(z_1,\ldots,z_n)$, we find that
$$
v(a) \ge \min\{v(a_iz_i)~|~1 \le i \le n\}=\min\{v(a_i)+v(z_i)~|~1 \le i \le n\}
 \ge \min\{v(z_i)~|~1 \le i \le n\},
$$
which implies that $v(a)$ is bounded from below by some positive constant. But this is a contradiction and so $B/\fm B$ is not almost zero.

On the other hand, for $\epsilon:=\frac{m}{n}>0$ with positive integers $m, n$, we have $v(p^{\epsilon})=\epsilon \cdot v(p)>0$, because $p$ is not a unit in $B$.
This shows that we cannot have $p^{\epsilon} \notin (p,x_{2},\ldots,x_{d})B$. So the conclusion follows from the previous corollary.
\end{proof}

In addition to the conditions of the corollary, assume that $b=0 \iff v(b)=\infty$. Then such a map is a valuation on $B$ and thus $B$ is a domain. This is why we assumed only $b=0 \Longrightarrow v(b)=\infty$ in the corollary. Alternatively, one may formulate both of the above corollaries over the Fontaine ring $\mathbf{E}(R^+)$ rather than over $R^+$, so that the conclusion would be to say that the $\mathbf{E}(R^+)$-algebra $S$ in Theorem \ref{Main1}  maps to an algebra, in which $\langle p \rangle,\langle x_2 \rangle,\ldots,\langle x_d \rangle$ becomes a regular sequence. As $S$ is obtained as the perfect closure of the huge direct limit of various algebra modifications over $\mathbf{E}(R^+)$, one could possibly pave a way to find a function $v:S \to \mathbb{R} \cup \{\infty\}$ satisfying the required conditions.

We also would like to point out that, although $R^+$ is not Noetherian, it is $\fm$-adically separated~\cite{Ho02}. By this fact, it seems that some condition similar to $p^{\epsilon} \notin (p,x_{2},\ldots,x_{d})B$ can be satisfied. So finally, let me simply say that our results strongly uphold the Monomial Conjecture in the mixed characteristic case.

\end{document}